\newtheorem{theorem}{Theorem}[section]
\newtheorem{lemma}[theorem]{Lemma}%[section]
\newtheorem{definition}[theorem]{Definition}%[section]
\newtheorem{example}[theorem]{Example}%[section]
\newtheorem{remark}[theorem]{Remark}%[section]
\newtheorem{proposition}[theorem]{Proposition}
\newcommand{\Sh}{\mbox{\rm Sh}}
\begin{document}
\title[Sum of Observables on MV-Effect Algebras]{Sum of Observables on MV-Effect Algebras}
\author[Anatolij Dvure\v{c}enskij]{Anatolij Dvure\v{c}enskij$^{1,2}$}
\date{}%
\maketitle
\begin{center}  \footnote{Keywords: Effect algebra, MV-effect algebra, monotone $\sigma$-complete effect algebra, observable, sharp observable, spectral resolution, sum of observables, Olson order, $\ell$-group, semigroup

 AMS classification: 03G12 81P15, 03B50, 06C15

The paper has been supported by the grant VEGA No. 2/0069/16 SAV
 and GA\v{C}R 15-15286S. }
Mathematical Institute,  Slovak Academy of Sciences\\
\v Stef\'anikova 49, SK-814 73 Bratislava, Slovakia\\
$^2$ Depart. Algebra  Geom.,  Palack\'{y} Univer.\\
17. listopadu 12, CZ-771 46 Olomouc, Czech Republic\\

E-mail: {\tt
dvurecen@mat.savba.sk}
\end{center}

\begin{abstract}
Using a one-to-one correspondence between observables and their spectral resolutions, we introduce the sum of any two bounded observables of a $\sigma$-MV-effect algebra. This sum is commutative, associative and with  neutral element. Under the Olson order of observables, the set of bounded observables is a partially ordered semigroup, and the set of sharp observables is even a Dedekind $\sigma$-complete $\ell$-group with strong unit.
\end{abstract}

\section{Introduction}

Quantum mechanics is a very effective way of the description of the physical world. In the last decades, quantum mechanics has important applications also in quantum information and quantum computing. In a classical physical system, the measurable events fulfil axioms of Boolean algebras. However, in the quantum mechanical world, this is not the case. Therefore, Birkhoff and von Neumann \cite{BiNe} introduced quantum logics as the event structure describing quantum mechanical experiments. Nowadays we have a whole hierarchy of quantum structures as orthomodular lattices, orthomodular posets, orthoalgebras, etc. An orthodox example important for quantum mechanics is the system $\mathcal L(H)$ of closed subspaces of a real, complex or quaternionic Hilbert space $H$. They describe the so-called yes-no quantum mechanical events.

In the Nineties, Foulis and Bennett introduced in \cite{FoBe} effect algebras as partial algebras with the primary notion $+$, a partial operation, where $a+b$ means disjunction of two mutually excluding events $a$ and $b$. The most important structure is the system $\mathcal E(H)$, the system of Hermitian operators of a Hilbert space $H$ which are between the zero and the identity operator. It describes both yes-no events and fuzzy quantum events. Another important example of effect algebras are MV-effect algebras, which are mathematically equivalent to MV-algebras.

Quantum mechanical measurements are performed by observables. They are a kind $\sigma$-homomorphisms from the Borel $\sigma$-algebra $\mathcal B(\mathbb R)$ into the quantum structure. In the classical Boolean case, observables are measurable mappings, in the case $\mathcal L(H)$ they correspond to projector-valued measures, or equivalently to Hermitian or symmetric operators on $H$, and in the case $\mathcal E(H)$, they are positive operator-valued measures, see e.g. \cite{DvPu}.

In the case of a Hilbert space $H$, there is a standard order of Hermitian operators $A\le B$ iff $(A\phi,\phi)\le (B\phi,\phi)$ for each unit vector $\phi\in H$, as well as the sum of Hermitian operators $A+B$ which defines a sum of bounded observables for $\mathcal L(H)$, see e.g. \cite{Var}. Under the standard order and with respect to the sum $+$, the class $\mathcal B(H)$ of Hermitian operators is a partially ordered group which is an antilattice, \cite{Kad}, $\mathcal E(H)$ has no lattice structure, and $\mathcal L(H)$ is a complete lattice.

Olson \cite{Ols} introduced another type of ordering of Hermitian operators. However, under this order, the class $\mathcal B(H)$ is not anymore a po-group. Using his approach, in \cite{286}, we have introduced the so-called Olson order of observables on complete effect algebras.

There is a serious problem how to define the sum of bounded observables in a general case. One of the first definition of the sum of observables for quantum logics was done in \cite{Gud} and it is based on the expectation values of observables in each state.

In the present paper we introduce sum of bounded observables using a one-to-one correspondence between observables and spectral resolutions. This method will be used for observables on $\sigma$-MV-algebras. In Section 3, we show that this sum is commutative, associative with neutral element. Using the Olson order, we show in Section 4 that the class of bounded observables is a distributive lattice as well as a lattice-ordered semigroup with respect to our sum of observables. If we study only sharp observables, they form even a Dedekind $\sigma$-complete $\ell$-group with strong unit.

\section{Preliminaries}

According to \cite{FoBe}, we say that an {\it effect algebra} is a partial algebra $E = (E;+,0,1)$ with a partially defined operation $+$ and with two constant elements $0$ and $1$  such that, for all $a,b,c \in E$,
\begin{enumerate}

\item[(i)] $a+b$ is defined in $E$ if and only if $b+a$ is defined, and in
such a case $a+b = b+a;$

 \item[(ii)] $a+b$ and $(a+b)+c$ are defined if and
only if $b+c$ and $a+(b+c)$ are defined, and in such a case $(a+b)+c
= a+(b+c)$;

 \item[(iii)] for any $a \in E$, there exists a unique
element $a' \in E$ such that $a+a'=1$;

 \item[(iv)] if $a+1$ is defined in $E$, then $a=0$.
\end{enumerate}

If we define $a \le b$ if and only if there exists an element $c \in
E$ such that $a+c = b$, then $\le$ is a partial ordering on $E$, and
we write $c:=b-a$. It is clear that $a' = 1 - a$ for all $a \in E$, and if $a\le b$, then $b-a=(b'+a)'$. For more information about effect algebras, see \cite{DvPu}.

An important family of effect algebras is connected with po-groups. We note that an Abelian group $(G;+,0)$ endowed with a partial order $\le$ is a {\it partially ordered group} (po-group, in abbreviation), if for all $f,g,h\in G$, $f\le g$ yields $f+h\le g+h$. If $\le$ entails a lattice structure on $G$, $G$ is said to be a {\it lattice ordered group} ($\ell$-group, in short). An element $u\ge 0$ of an $\ell$-group $G$ is a {\it strong unit} if given $g \in G$, there is an integer $n\ge 1$ such that $g \le nu$. The couple $(G,u)$ with a fixed strong unit $u$ is said to be a {\it unital $\ell$-group}.

If $G$ is an Abelian partially ordered group written additively, choose an element $u \in G^+:=\{g \in G \colon g \ge 0\}$, and set $\Gamma(G,u):=[0,u]=\{g \in G: 0 \le g \le u\}$. Then $(\Gamma(G,u);+,0,u)$ is an effect algebra, where $+$ is the group addition of elements from $\Gamma(G,u)$  if it exists in $\Gamma(G,u)$.

A sufficient condition to be an effect algebra an interval one is the Riesz Decomposition Property, \cite{Rav}. We say that an effect algebra $E$ satisfies the Riesz Decomposition Property (RDP for short), if $a_1+a_2=b_1+b_2$ implies that there are four elements $c_{11},c_{12},c_{21},c_{22}\in E$ such that $a_1 = c_{11}+c_{12}$, $a_2= c_{21}+c_{22}$, $b_1= c_{11} + c_{21}$ and $b_2= c_{12}+c_{22}$. Equivalently, $E$ has RDP iff $a\le b+c$ implies that there are $b_1,c_1\in E$ such that $b_1\le b$, $c_1\le c$ and $a=b_1+c_1$.

An effect algebra $E$ is {\it monotone} $\sigma$-{\it complete} if, for any sequence $a_1 \le a_2\le \cdots,$ the element $a = \bigvee_n a_n$  is defined in $E$ (we write $\{a_n\}\nearrow a$). If an effect algebra is a lattice or a $\sigma$-lattice or a complete lattice, we say that $E$ is a {\it lattice effect algebra}, a $\sigma$-{\it lattice effect algebra}, and a {\it complete lattice effect algebra}, respectively.

A lattice effect algebra $E$ is an {\it MV-effect algebra} if $a\wedge b=0$ implies $a+b$ is defined in $E$ $(a,b \in E)$. If $(M;\oplus,\odot,^*,0,1)$ is an MV-algebra (for definition of MV-algebras see e.g. \cite{CDM}), we can define a partial operation $+$ on $M$ as follows $a+b$ is defined  iff $a\le b^*$ and in such a case, $a+b:=a\oplus b$. Then $(M;+,0,1)$ is an MV-effect algebra. Conversely, if $(E;+,0,1)$ is an MV-effect algebra, then we define total binary operations $\oplus $ and $\odot$ on $E$ defined by $a\oplus b:= a+(a'\wedge b)$ and $a\odot b :=(a'\oplus b')'$, $a,b \in E$, such that $(E;\oplus,',0,1)$ is an MV-algebra. Thus MV-effect algebras are equivalent to MV-algebras, \cite{DvPu}. Moreover, a lattice ordered effect algebra is an MV-algebra iff $E$ satisfies RDP. We note that every MV-effect-algebra is a distributive lattice. In addition, due to Mundici's representation theorem of MV-algebras, \cite{CDM}, for each MV-effect algebra $E$ there is a unique (up to isomorphism) a unital $\ell$-group $(G,u)$ such that $E \cong \Gamma(G,u)$.

A lattice ordered effect algebra $E$ is a {\it Boolean effect algebra} if $a+b$ exists in $E$ iff $a\wedge b =0$. Boolean effect algebras are equivalent to Boolean algebras.

If $E$ is a lattice effect algebra, then it can be covered by a system of MV-sub-effect algebras \cite{Rie}. A similar situation can happen also for $\mathcal E(H)$ nevertheless RDP fails for $\mathcal E(H)$, see \cite{Pul}.

An effect algebra $E$ is said to be an orthoalgebra if the existence of $a+a$ in $E$ implies $a=0$. Equivalently, an effect algebra is an orthoalgebra iff for each $a\in E\setminus \{0\}$, there is no non-zero element $b$ under $a$ and $a'$.

A finite system $(a_1,\ldots, a_n)$ of elements of $E$ is {\it summable} if $a:=a_1+\cdots+a_n:=\sum_{n=1}^na_i $ exists, and the element $a$ is said to be the {\it sum} of $(a_1,\ldots,a_n)$. An arbitrary system $\{a_t: t \in T\}$ of elements of $E$ is said to be {\it summable} if every finite subsystem of $\{a_t: t \in T\}$ is summable. If, in addition, there exists $a :=\bigvee\{\sum\{a_t: t \in S\}: S$ is a finite subset of $T\}$ in $E$, the element $a$ is said to be the {\it sum} of $\{a_t: t \in T\}$, and we write $a = \sum_{t \in T}a_t$.

\section{Sum of Observables}%3

Using a one-to-one relationship between observables and spectral resolutions, we introduce the sum of any two bounded observables of a $\sigma$-MV-effect algebra. We establish that the sum is commutative, associative, we find a neutral element.

Let $(\Omega,\mathcal S)$ be a measurable space, i.e. $\Omega$ is a non-void set and $\mathcal S$ is a $\sigma$-algebra of subsets of $\Omega$. Let $f,g:\Omega \to \mathbb R$ be $\mathcal S$-measurable functions. It is well-known that $f+g$ is also $\mathcal S$-measurable. The proof of this fact is based on the property
$$
\{\omega \in \Omega: f(\omega)+g(\omega)<t\}= \bigcup_{r \in \mathbb Q}(\{\omega \in \Omega: f(\omega)<r\} \cap \{\omega \in \Omega: g(\omega)<t-r\})\eqno(3.0)
$$
for each $t \in \mathbb R$, where $\mathbb Q $ is the set of rational numbers, see e.g. \cite[Thm 19.A]{Hal}.

An analogue of measurable functions for effect algebras are observables.

\begin{definition}
{\rm Let $E$ be a monotone $\sigma$-complete effect algebra. An {\it observable} on $E$ is any mapping $x:\mathcal B(\mathbb R)\to E$, where $\mathcal B(\mathbb R)$ is the Borel $\sigma$-algebra of the real line $\mathbb R$, such that (i) $x(\mathbb R)=1$, (ii) if $A,B \in \mathcal B(\mathbb R)$, $A \cap B= \emptyset$, then $x(A\cup B)=x(A)+x(B)$, and (iii) if $\{A_i\}$ is a sequence of Borel sets such that $E_i\subseteq E_{i+1}$ for each $i$ and $\bigcup_i A_i=A$, then $x(A) = \bigvee_i x(A_i)$.

An observable $x$ is {\it bounded} if there is an interval $[\alpha,\beta]$ such that $x([\alpha,\beta])=1$. We denote by $\mathcal O(E)$ and $\mathcal{BO}(E)$ the set of observables and bounded observables on $E$, respectively.
}
\end{definition}

Observables can be constructed also in the following way. Let $\{a_n\}$ be a finite or infinite sequence of summable elements, $\sum_n a_n = 1$, and let $\{t_n\}$ be a sequence of mutually different real numbers. Then the mapping $x: \mathcal B(\mathbb R) \to E$ defined by

$$
x(A):= \sum\{a_n: t_n \in A\}, \ A \in \mathcal B(\mathbb R), \eqno(3.1)
$$
is an observable on $E$. In particular, if $t_0=0$, $t_1=1$ and $a_0=a'$, $a_1=a$ for some fixed element $a \in E,$  $x$ defined by (3.1) is an observable, called the {\it question} corresponding to the element $a$, and we write $x=q_a$.

If $f: \mathbb R \to \mathbb R$ is a Borel measurable function and $x$ an observable on $E$, the mapping $f\circ x: A \mapsto x(f^{-1}(A))$, $A \in \mathcal B(\mathbb R)$, is also an observable on $E$.

In the series of papers \cite{DvKu,270,289}, there was shown that observables are in a one-to-one correspondence with spectral resolutions:

\begin{theorem}\label{th:2.2}
Let $x$ be an observable on a $\sigma$-lattice effect algebra $E$. Given a real number $t \in \mathbb R,$ we put

$$ B_x(t) := x((-\infty, t)). %\eqno(3.1)
$$
Then

$$ B_x(t) \le B_x(s) \quad {\rm if} \ t < s, \eqno (3.2)$$

$$\bigwedge_t B_x(t) = 0,\quad \bigvee_t B_x(t) =1, \eqno(3.3)
$$
and
$$ \bigvee_{t<s}B_x(t) = B_x(s), \ s \in \mathbb R. \eqno(3.4)
$$

Conversely, if there is a system $\{B(t): t \in \mathbb R\}$ of elements of $E$ satisfying {\rm
(3.2)--(3.4)}, then there is a unique observable $x$ on $E$ for which $B_x(t)=B(t)$ holds for any $t \in \mathbb R$.
\end{theorem}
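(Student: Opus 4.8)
The plan is to prove the two implications separately, the forward one being essentially bookkeeping with the effect-algebra axioms, the converse one carrying the real content.

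For the forward direction I would first record two consequences of the axioms: that $x(\emptyset)=0$, obtained by applying cancellativity to $x(\mathbb R)=x(\mathbb R)+x(\emptyset)$, and that $x$ is monotone, since $A\subseteq B$ gives $x(B)=x(A)+x(B\setminus A)\ge x(A)$; the latter yields (3.2) at once from $(-\infty,t)\subseteq(-\infty,s)$. I would then upgrade the continuity from below in axiom (iii) to continuity from above by complementation: from $x(A)+x(\mathbb R\setminus A)=1$ one gets $x(\mathbb R\setminus A)=x(A)'$, and since the orthocomplementation reverses order and hence sends existing joins to meets in the $\sigma$-lattice $E$, a decreasing sequence $A_i\searrow A$ gives $\bigwedge_i x(A_i)=\big(\bigvee_i x(\mathbb R\setminus A_i)\big)'=x(A)$. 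Now (3.3) follows by applying (iii) to $(-\infty,n)\nearrow\mathbb R$ and continuity from above to $(-\infty,-n)\searrow\emptyset$, while (3.4) follows by applying (iii) to $(-\infty,s-1/n)\nearrow(-\infty,s)$, using monotonicity to pass from these sequential suprema to the full suprema in the statements.

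For the converse I would reconstruct $x$ from $\{B(t)\}$ by first defining it on the algebra $\mathcal A_0$ of finite disjoint unions of half-open intervals, setting $x([a,b)):=B(b)-B(a)$ (which is legitimate by (3.2)) and extending additively, then checking that this is well defined and finitely additive on $\mathcal A_0$. The decisive step is to promote $x$ from $\mathcal A_0$ to the generated $\sigma$-algebra $\sigma(\mathcal A_0)=\mathcal B(\mathbb R)$ so that axioms (i)--(iii) hold. Here the hypotheses (3.3)--(3.4) supply precisely the $\sigma$-continuity of the increments that is needed; for instance left continuity (3.4) gives $x([a,b_n))\nearrow x([a,b))$ when $b_n\nearrow b$. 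With this in hand one invokes the monotone $\sigma$-complete analogue of the Carathéodory extension theorem for effect-algebra-valued measures, as developed in the cited work on spectral resolutions, to obtain a $\sigma$-additive observable $x$ with the prescribed values on intervals; by construction $B_x(t)=x((-\infty,t))=B(t)$. Uniqueness I would separate off and settle by a Dynkin $\pi$--$\lambda$ argument: if $x,y$ are observables with $B_x=B_y$, then $\mathcal D=\{A\in\mathcal B(\mathbb R):x(A)=y(A)\}$ contains $\mathbb R$, is closed under complements because $x(\mathbb R\setminus A)=x(A)'$ and likewise for $y$, and is closed under countable disjoint unions by finite additivity together with (iii); since $\mathcal D$ contains the $\pi$-system $\{(-\infty,t):t\in\mathbb R\}$ generating $\mathcal B(\mathbb R)$, we conclude $\mathcal D=\mathcal B(\mathbb R)$, i.e. $x=y$.

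I expect the genuine obstacle to be the extension step in the converse. Unlike the scalar case there is no ambient real-valued measure against which to run Carathéodory's construction, so one must either argue directly with a monotone class inside the monotone $\sigma$-complete structure of $E$, verifying that the value assigned to a Borel set is independent of its approximation by members of $\mathcal A_0$, or first represent $E$ inside a tribe of functions by a Loomis--Sikorski type embedding, perform the classical extension there, and pull the resulting measure back to $E$. Confirming that the supremum defining $x(A)$ does not depend on the chosen covering intervals, and that the resulting map is genuinely $\sigma$-additive and continuous, is the technical heart of the theorem.
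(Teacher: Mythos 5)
You should note first that the paper itself contains no proof of Theorem \ref{th:2.2}: it is imported wholesale from the cited works \cite{DvKu,270,289}, so your attempt has to be measured against what those proofs actually require. Your forward direction is correct and complete: $x(\emptyset)=0$, monotonicity (hence (3.2)), the complementation trick converting continuity from below into continuity from above, and the passage from sequential suprema/infima to suprema/infima over all reals give (3.3) and (3.4) exactly as you say. Your uniqueness argument is also sound; it is the same Dynkin--Sierpi\'nski device that the paper itself uses later, in the proof of Proposition \ref{pr:3.10}.

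The genuine gap is the existence half of the converse, and you have flagged it without closing it. The ``monotone $\sigma$-complete analogue of the Carath\'eodory extension theorem for effect-algebra-valued measures'' that you invoke does not exist as an independent, off-the-shelf tool; it is, for all practical purposes, the statement being proved, so appealing to it is circular. Nor can Carath\'eodory's construction be imitated directly: it runs on outer measures, i.e.\ on countable subadditivity and infima in $\mathbb R$, notions with no counterpart when the values lie in a partial algebra that need not even be distributive. Your fallback, a Loomis--Sikorski embedding, is indeed the route the cited proofs take, but that representation theorem applies to $\sigma$-MV-algebras (more generally, to monotone $\sigma$-complete effect algebras with RDP), realizing them as $\sigma$-epimorphic images of tribes of fuzzy sets; it is not available for a bare $\sigma$-lattice effect algebra, which is the generality of the statement. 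The missing idea is compatibility: the family $\{B(t):t\in\mathbb R\}$ is a chain, hence a pairwise compatible set, hence by Rie\v canov\'a's block theorem \cite{Rie} it lies inside a block (a maximal MV-sub-effect algebra); the known proofs then check that the countable suprema involved are computed in the block as in $E$, which reduces everything to the $\sigma$-MV case, where the tribe representation turns the spectral resolution into a pointwise family of distribution functions, classical measure theory is applied pointwise, and the result is pushed back down along the $\sigma$-epimorphism. Without this reduction (or some substitute for it), your outline establishes uniqueness but not existence, and existence is the real content of the theorem.
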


The system $\{B_x(t): t \in \mathbb R\}$ from Theorem \ref{th:2.2} satisfying (3.2)--(3.4) is said to be the {\it spectral resolution} of an observable $x$. The foregoing theorem is also true if we take $\{B(t): t \in \mathbb Q\}$ satisfying (3.2)--(3.4).

We note that if $E$ is a $\sigma$-MV-effect algebra, then according to \cite[Lem 6.6.4]{CDM}, the following kind of distributive laws hold in $E$
$$
a\wedge (\bigvee_n a_n)= \bigvee_n(a\wedge a_n),\quad a\vee (\bigwedge_n a_n)= \bigwedge_n(a\vee a_n) \eqno(3.5)
$$
for each $a, a_n \in E$, $n \ge 1$.

Motivated by (3.0) and applying spectral resolutions, we have the following result.

\begin{theorem}\label{th:3.3}
Let $x,y$ be bounded observables on a $\sigma$-MV-effect algebra $E$ and let $\{B_x(t): t \in \mathbb R\}$ and $\{B_y(t): t \in \mathbb R\}$ be their spectral resolutions.
We define
$$
B_{x+y}(t):= \bigvee_{r \in \mathbb Q}(B_x(r)\wedge B_y(t-r)),\quad t \in \mathbb R. \eqno(3.6)
$$
Then $\{B_{x+y}(t): t \in \mathbb R\}$ satisfies conditions {\rm (3.2)--(3.4)} and there is a unique bounded observable $z$ on $E$ such that $B_{x+y}(t)=B_z(t)$ for each $t \in \mathbb R$.

In addition, $B_{x+y}(t)=B_{y+x}(t)$ for each $t \in \mathbb R$.
\end{theorem}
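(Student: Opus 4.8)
The plan is to verify that the family $\{B_{x+y}(t):t\in\mathbb R\}$ defined by (3.6) satisfies (3.2)--(3.4) and then to invoke Theorem \ref{th:2.2}. First I would settle existence of the join in (3.6): since $E$ is a $\sigma$-MV-effect algebra it is both a lattice and monotone $\sigma$-complete, and $\mathbb Q$ is countable, so enumerating $\mathbb Q=\{r_1,r_2,\dots\}$ and forming the increasing sequence $c_n=\bigvee_{i=1}^n\bigl(B_x(r_i)\wedge B_y(t-r_i)\bigr)$ of finite joins, the supremum $\bigvee_n c_n$ exists and equals the right-hand side of (3.6). Condition (3.2) is then immediate: if $t<s$ then $B_y(t-r)\le B_y(s-r)$ by monotonicity of the spectral resolution of $y$, hence $B_x(r)\wedge B_y(t-r)\le B_x(r)\wedge B_y(s-r)$ for every $r$, and joining over $r$ gives $B_{x+y}(t)\le B_{x+y}(s)$.

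For (3.3) I would exploit boundedness. Choose intervals $[\alpha_x,\beta_x]$ and $[\alpha_y,\beta_y]$ with $x([\alpha_x,\beta_x])=1=y([\alpha_y,\beta_y])$; then $B_x(r)=0$ for $r\le\alpha_x$, $B_x(r)=1$ for $r>\beta_x$, and similarly for $y$. If $t\le\alpha_x+\alpha_y$, then for each rational $r$ either $r\le\alpha_x$ (so $B_x(r)=0$) or $t-r<\alpha_y$ (so $B_y(t-r)=0$), whence every summand in (3.6) vanishes and $B_{x+y}(t)=0$; this yields $\bigwedge_t B_{x+y}(t)=0$. If $t>\beta_x+\beta_y$, the interval $(\beta_x,t-\beta_y)$ is nonempty and contains a rational $r$ with $B_x(r)=1$ and $B_y(t-r)=1$, so $B_{x+y}(t)=1$, giving $\bigvee_t B_{x+y}(t)=1$. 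These same two observations show that $z$ is supported on $[\alpha_x+\alpha_y,\beta_x+\beta_y]$, hence bounded.

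The essential step is (3.4). The inequality $\bigvee_{s<t}B_{x+y}(s)\le B_{x+y}(t)$ follows from (3.2). For the reverse inequality I would fix $r\in\mathbb Q$ and apply the MV-distributive law (3.5) together with the left-continuity (3.4) of $B_y$, both valid because $E$ is a $\sigma$-MV-effect algebra:
$$
B_x(r)\wedge B_y(t-r)=B_x(r)\wedge\bigvee_{u<t-r}B_y(u)=\bigvee_{u<t-r}\bigl(B_x(r)\wedge B_y(u)\bigr),
$$
the joins taken over rationals $u$. For each such $u$, setting $s:=r+u<t$ gives $B_x(r)\wedge B_y(u)=B_x(r)\wedge B_y(s-r)\le B_{x+y}(s)\le\bigvee_{s<t}B_{x+y}(s)$. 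Joining first over $u$ and then over $r$ yields $B_{x+y}(t)\le\bigvee_{s<t}B_{x+y}(s)$, so (3.4) holds, and Theorem \ref{th:2.2} provides the unique bounded observable $z$ with $B_z=B_{x+y}$.

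For commutativity I would first show that the countable join in (3.6) coincides with the join over all real $r$: given $r\in\mathbb R$, writing $B_x(r)=\bigvee_{q<r}B_x(q)$ over rationals $q$ and using (3.5), each $B_x(q)\wedge B_y(t-r)\le B_x(q)\wedge B_y(t-q)$ (since $t-r<t-q$), so the real-indexed join does not exceed the rational one, while the reverse is trivial. With $r$ ranging over all of $\mathbb R$, the substitution $q=t-r$ is a bijection, and commutativity of $\wedge$ gives $\bigvee_{r\in\mathbb R}\bigl(B_x(r)\wedge B_y(t-r)\bigr)=\bigvee_{q\in\mathbb R}\bigl(B_y(q)\wedge B_x(t-q)\bigr)$, which reduces back (by the same device) to the rational join defining $B_{y+x}(t)$. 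Hence $B_{x+y}(t)=B_{y+x}(t)$. I expect the interplay between the rational index set and the left-continuity (3.4) to be the main obstacle, since it is precisely what forces the use of the MV-distributive law (3.5); everything else is routine monotonicity bookkeeping.
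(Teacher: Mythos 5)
Your proposal is correct, and for (3.2)--(3.4) it follows essentially the paper's argument: both hinge on the countable distributive law (3.5) together with left-continuity of the spectral resolutions (the paper writes (3.4) as a chain of equalities, exchanging the two joins and pulling $B_x(r)$ out by (3.5); you split it into two inequalities, restricting to rational $u$, which amounts to the same thing), and both extract boundedness of $z$ from the computation in (3.3), where your use of two intervals and the explicit support $[\alpha_x+\alpha_y,\beta_x+\beta_y]$ is a mild refinement of the paper's single-interval argument. Where you genuinely diverge is commutativity. The paper first proves $B_{x+y}(t)=B_{y+x}(t)$ for \emph{rational} $t$, where $r\mapsto t-r$ is a bijection of $\mathbb Q$, and then extends to arbitrary real $t$ by left-continuity along a sequence $t_n\nearrow t$ (which, for the argument to apply, should consist of rationals --- a small glitch in the paper's wording). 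You instead show that the rational-indexed join in (3.6) coincides with the join over all real $r$, using (3.5) and left-continuity of $B_x$, and then apply the bijection $r\mapsto t-r$ of $\mathbb R$ directly. Your route needs no case split on $t$ and no limiting argument, and it in effect proves a real-indexed strengthening of the paper's Proposition \ref{pr:3.4} (which the paper establishes separately for countable dense index sets). Its only cost is that one must justify the existence of the uncountable join in a merely monotone $\sigma$-complete lattice; you handle this correctly, since the rational join is shown to be an upper bound of the real-indexed family and hence is its supremum.
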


\begin{proof}
(3.2). It is evident.

(3.3). There is a bounded interval $[\alpha,\beta]$ such that $x([\alpha,\beta])=1= y([\alpha,\beta])$. Given $t\in \mathbb R$, the equation $t-r=r$ has the solution $r_0=t/2$. Choose $t$ such that $r_0=t/2 <\alpha$. If $r\le r_0$, then $B_x(t)=0$. If $r>r_0$, then $t-r < t-r_0=r_0<\alpha$, so that $B_y(t-r)=0$ which proves $B_{x+y}(t)=0$. Hence,
$\bigwedge_{t \in \mathbb R} B_{x+y}(t)=0$.

Now let $r, t-r>\beta$, then $B_x(r)=B_y(t-r)=1$ and $B_{x+y}(t)=1$, so that $\bigvee_{t\in \mathbb R} B_{x+y}(t)=1$.

(3.4). Check
\begin{eqnarray*}
\bigvee_{t<s} B_{x+y}(t)&=&\bigvee_{t<s} \bigvee_{r\in \mathbb Q}(B_x(r)\wedge B_y(t-r))\\
&=& \bigvee_{r\in \mathbb Q}\bigvee_{t<s}(B_x(r)\wedge B_y(t-r))\\
&=& \bigvee_{r \in \mathbb Q}(B_x(r)\wedge \bigvee_{t<s} B_y(t-r))\\
&=& \bigvee_{r\in \mathbb Q}(B_x(r)\wedge B_y(s-r))= B_{x+y}(s).
\end{eqnarray*}

Now we show that $B_{x+y}(t)=B_{y+x}(t)$ for each $t \in \mathbb R$. (i) Let $t$ be a rational number and set $s=t-r$. Then $\bigvee_{r \in \mathbb Q}(B_x(r)\wedge B_y(t-r))= \bigvee_{s\in \mathbb Q}(B_x(t-s)\wedge B_y(s))=B_{y+x}(t)$. If $t$ is an arbitrary real number, let $\{t_n\}$ be a sequence of real numbers such that $\{t_n\}\nearrow t$. Then using (3.4), we have  $B_{x+y}(t)=\bigvee_{t_n\nearrow t}B_{x+y}(t_n)= \bigvee_{t_n\nearrow t}B_{y+x}(t_n)= B_{y+x}(t)$.

Since the system $\{B_{x+y}(t): t \in \mathbb R\}$ satisfies (3.2)--(3.4), so that by Theorem \ref{th:2.2}, there is a unique observable $z$ on $E$ such that $\{B_{x+y}(t): t \in \mathbb R\}$ is the spectral resolution of $z$. By the proof of (3.3), we see that $z$ is a bounded observable.
\end{proof}

The bounded observable $z$ from the latter theorem is said to be the {\it sum} of  bounded observables $x$ and $y$. In addition, the subscript $x+y$ in $B_{x+y}(t)$, see (3.6), is correct because it corresponds to the spectral resolution of $z=x+y$.

Now we show that $B_{x+y}(t)$ can be calculated also in another way.

\begin{proposition}\label{pr:3.4}
Let $\mathbb S$ be a countable dense subset of real numbers. Let $x,y$ be bounded observables on a $\sigma$-MV-effect algebra $E$. Define
$$
B_{x+y}^\mathbb S(t)=\bigvee_{s \in \mathbb S}(B_x(s)\wedge B_y(t-s)),\quad t \in \mathbb R.
$$
Then $B_{x+y}^\mathbb S = B_{x+y}(t)$ for each $t \in \mathbb R$.
\end{proposition}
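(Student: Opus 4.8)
The plan is to isolate one inequality that is valid for \emph{any} countable dense set of indices and then apply it twice, once with $\mathbb{Q}$ and once with $\mathbb{S}$, to obtain the two halves of the claimed equality. First I would record a refinement of the left-continuity condition (3.4): for any countable dense subset $\mathbb{D}\subseteq\mathbb{R}$ and any real $v$,
\[
B_x(v)=\bigvee_{d\in\mathbb{D},\,d<v}B_x(d),
\]
and likewise for $B_y$. This is immediate from (3.2) and (3.4), since the right-hand side is dominated by $B_x(v)$ by monotonicity, while for every real $w<v$ density yields $d\in\mathbb{D}$ with $w<d<v$, so $B_x(w)\le B_x(d)$ and hence the join over $\mathbb{D}$ dominates $\bigvee_{w<v}B_x(w)=B_x(v)$.

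The key lemma I would then prove is the following: for any countable dense $\mathbb{D}\subseteq\mathbb{R}$, any $t\in\mathbb{R}$, and any real number $s$,
\[
B_x(s)\wedge B_y(t-s)\le\bigvee_{d\in\mathbb{D}}\bigl(B_x(d)\wedge B_y(t-d)\bigr).
\]
To establish it, expand both factors by the refined left-continuity, $B_x(s)=\bigvee_{w\in\mathbb{D},\,w<s}B_x(w)$ and $B_y(t-s)=\bigvee_{u\in\mathbb{D},\,u<t-s}B_y(u)$. As $\mathbb{D}$ is countable and each family is monotone, it may be replaced by an increasing sequence with the same join, so the distributive laws (3.5) apply twice and yield
\[
B_x(s)\wedge B_y(t-s)=\bigvee_{w<s}\ \bigvee_{u<t-s}\bigl(B_x(w)\wedge B_y(u)\bigr),
\]
the joins taken over $w,u\in\mathbb{D}$. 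For each such pair one has $w+u<s+(t-s)=t$, hence $u<t-w$, and so $B_y(u)\le B_y(t-w)$ by (3.2). Consequently $B_x(w)\wedge B_y(u)\le B_x(w)\wedge B_y(t-w)$, which is exactly the summand indexed by $d=w$ on the right-hand side; taking the join over all pairs proves the lemma.

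With the lemma in hand the proposition follows in two symmetric steps. Applying it with $\mathbb{D}=\mathbb{Q}$ and letting $s$ range over $\mathbb{S}$ gives $B_x(s)\wedge B_y(t-s)\le B_{x+y}(t)$ for every $s\in\mathbb{S}$, and joining over $\mathbb{S}$ yields $B_{x+y}^{\mathbb{S}}(t)\le B_{x+y}(t)$. Applying it instead with $\mathbb{D}=\mathbb{S}$ and letting $s$ range over $\mathbb{Q}$ gives $B_x(r)\wedge B_y(t-r)\le B_{x+y}^{\mathbb{S}}(t)$ for every $r\in\mathbb{Q}$, whence $B_{x+y}(t)\le B_{x+y}^{\mathbb{S}}(t)$. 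Combining the two inequalities proves $B_{x+y}^{\mathbb{S}}(t)=B_{x+y}(t)$ for each $t$.

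I expect the main obstacle to be the central maneuver in the lemma: a single term $B_x(s)\wedge B_y(t-s)$ with $s\notin\mathbb{D}$ cannot be dominated by one term indexed by a point of $\mathbb{D}$, because raising the first index increases $B_x$ but simultaneously lowers $t-s$ and hence $B_y$. The device that resolves this opposing monotonicity is to break each factor into a countable join over dense points \emph{strictly} below (via left-continuity), push the meet inside through the MV-distributive law (3.5), and then exploit the resulting strict inequality $w+u<t$ to compare $B_y(u)$ with $B_y(t-w)$. The only point requiring care is the use of (3.5), which is stated for sequences; since the relevant index sets are countable and the families $B_x(w)$, $B_y(u)$ are monotone, they may be rewritten as increasing sequences with the same joins, so (3.5) is legitimately applicable.
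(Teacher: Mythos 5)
Your proof is correct, and it takes a genuinely different route from the paper's. The paper first establishes left-continuity of $t\mapsto B^{\mathbb S}_{x+y}(t)$ (by repeating the computation used for (3.4) in Theorem \ref{th:3.3}), and then proves $B^{\mathbb S}_{x+y}(t-1/n)\le B_{x+y}(t)$ term by term: for each $s\in\mathbb S$ it picks a rational $r_n$ with $s<r_n<s+1/n$, so that $B_x(s)\wedge B_y(t-1/n-s)\le B_x(r_n)\wedge B_y(t-r_n)$, and then passes to the limit via $B^{\mathbb S}_{x+y}(t)=\bigvee_n B^{\mathbb S}_{x+y}(t-1/n)\le B_{x+y}(t)$, declaring the reverse inequality ``similar.'' You avoid the $1/n$-shift of the argument $t$ entirely: your pointwise lemma dominates each single term $B_x(s)\wedge B_y(t-s)$ by the join over an arbitrary countable dense index set, by expanding both factors through left-continuity of the \emph{individual} spectral resolutions and pushing the meet inside with (3.5). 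Both arguments resolve the opposing monotonicity by the same underlying observation --- strictly lowering an index leaves room to move it onto the dense set --- but your decomposition buys symmetry: a single lemma yields both inequalities by exchanging the roles of $\mathbb Q$ and $\mathbb S$, with no need to re-derive property (3.4) for $B^{\mathbb S}_{x+y}$, and it makes explicit the general principle that all countable dense index sets produce the same join. The paper's version is slightly more economical in that it recycles a computation already carried out in Theorem \ref{th:3.3}. Your precaution about (3.5) --- replacing countable monotone families by increasing sequences before applying the sequential distributive law --- is legitimate and is in fact tacitly needed in the paper's argument as well.
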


\begin{proof}
In the same way as in the proof of (3.4) of Theorem \ref{th:3.3}, we can prove that $\bigvee_{t<s}B_{x+y}^\mathbb S(t) = B_{x+y}^\mathbb S(t)$ for each $t \in \mathbb R$, and in addition, if  $\{t_n\}\nearrow t$, where $t_n \in \mathbb S$ for $n \ge 1$, then $\bigvee_n B_{x+y}^\mathbb S(t_n)=B_{x+y}^ \mathbb S(t)$.

For each integer $n\ge 1$ and for each $s \in \mathbb S$, there is a rational $r_n$ such that $s<r_n<s+1/n$. Therefore, $B_x(s) \wedge B_y(t-1/n-s)\le B_x(r_n)\wedge B_y(t-r_n)$ and $B_{x+y}^\mathbb S(t-1/n)\le B_x(t)$, $B_{x+y}^\mathbb S(t) = \bigvee_n B_{x+y}^\mathbb S(t-1/n) \le B_{x+y}(t)$.

In a similar way we show that $B_{x+y}(t)\le B_{x+y}^\mathbb S(t)$ for each $t \in \mathbb R$.
\end{proof}

Now we establish basic properties of the sum of observables.

\begin{proposition}\label{pr:3.5} If $x,y,z$ are bounded observable on a $\sigma$-MV-effect algebra $E$, then $x+y=y+x$ and $(x+y)+z= x+(y+z)$. If we set $o:=q_0$, then $x+o=o+x$.
\end{proposition}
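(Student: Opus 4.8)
The plan is to exploit the one-to-one correspondence between observables and spectral resolutions from Theorem~\ref{th:2.2}: two bounded observables coincide if and only if their spectral resolutions agree. Hence each of the three assertions reduces to an identity between systems $\{B(t):t\in\mathbb R\}$. Commutativity $x+y=y+x$ requires no new work, since the final paragraph of the proof of Theorem~\ref{th:3.3} already established $B_{x+y}(t)=B_{y+x}(t)$ for every $t\in\mathbb R$.

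For associativity I would compute the spectral resolutions of both $(x+y)+z$ and $x+(y+z)$ and check that they coincide for each $t\in\mathbb R$. Writing $w=x+y$, the defining formula (3.6) gives $B_{(x+y)+z}(t)=\bigvee_{r\in\mathbb Q}(B_w(r)\wedge B_z(t-r))$ with $B_w(r)=\bigvee_{p\in\mathbb Q}(B_x(p)\wedge B_y(r-p))$. Applying the distributive law (3.5) to pull $B_z(t-r)$ inside the inner join, and then flattening the resulting countable double join over $\mathbb Q\times\mathbb Q$, I expect to obtain
$$
B_{(x+y)+z}(t)=\bigvee_{p,q\in\mathbb Q}\big(B_x(p)\wedge B_y(q)\wedge B_z(t-p-q)\big),
$$
after the substitution $q=r-p$ (so that $t-r=t-p-q$), which sweeps out all of $\mathbb Q\times\mathbb Q$ as $r,p$ range over $\mathbb Q$. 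An entirely symmetric computation starting from $v=y+z$ yields the same expression for $B_{x+(y+z)}(t)$, so equality of spectral resolutions gives $(x+y)+z=x+(y+z)$.

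For the neutral element I would first determine the spectral resolution of $o=q_0$. Since $q_0$ places the element $1$ at the point $0$ and $0$ at the point $1$, formula (3.1) gives $o((-\infty,t))=1$ when $t>0$ and $o((-\infty,t))=0$ when $t\le 0$; thus $B_o(t-r)=1$ exactly when $r<t$. Substituting into (3.6) yields $B_{x+o}(t)=\bigvee_{r\in\mathbb Q,\,r<t}B_x(r)$, which equals $B_x(t)$ by the left-continuity property (3.4), valid over $\mathbb Q$ as remarked after Theorem~\ref{th:2.2}. Hence $x+o=x$, and commutativity then gives $o+x=x+o=x$, so $o$ is a neutral element.

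The main obstacle is the associativity step, specifically the manipulation of the iterated countable suprema: I must justify both the distribution of the finite meet over a countable join via (3.5) and the flattening of the double join $\bigvee_{r}\bigvee_{p}$ into a single join over $\mathbb Q\times\mathbb Q$. The latter relies on the monotone $\sigma$-completeness of $E$, which guarantees that all the countable suprema in sight exist, together with the order-theoretic fact that a countable double supremum may be reassociated and reindexed freely; the key distributive identity (3.5) is available precisely because $E$ is a $\sigma$-MV-effect algebra.
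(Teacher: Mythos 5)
Your proposal is correct and follows essentially the same route as the paper: both reduce each identity to an equality of spectral resolutions via Theorem~\ref{th:2.2}, prove associativity by expanding (3.6) with the distributive law (3.5) and reindexing countable joins over $\mathbb{Q}$, and verify neutrality of $o=q_0$ by direct computation with (3.4). The only cosmetic difference is that you flatten both sides into the symmetric double join $\bigvee_{p,q\in\mathbb{Q}}\bigl(B_x(p)\wedge B_y(q)\wedge B_z(t-p-q)\bigr)$, whereas the paper keeps the nested form and invokes commutativity twice; the two computations are interchangeable.
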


\begin{proof}
The commutativity of $+$ was established in Theorem \ref{th:3.3}. For the associativity of $+$, applying (3.5), check
\begin{eqnarray*}
B_{(x+y)+z}(t)&=& B_{z+(x+y)}(t) = \bigvee_{r\in \mathbb Q}(B_z(r)\wedge B_{x+y}(t-r))\\
&=& \bigvee_{r \in \mathbb Q}\big(B_z(r)\wedge \bigvee_{s \in \mathbb Q}(B_x(s)\wedge B_y(t-r-s)\big)\\
&=& \bigvee_{r\in \mathbb Q} \bigvee_{s\in \mathbb Q}(B_z(r)\wedge B_x(s)\wedge B_y(t-r-s))\\
&=& \bigvee_{s\in \mathbb Q} \bigvee_{r\in \mathbb Q}(B_z(r)\wedge B_x(s)\wedge B_y(t-r-s))\\
&=& \bigvee_{s\in \mathbb Q}\big(B_x(s)\wedge \bigvee_{r \in \mathbb Q}(B_z(r) \wedge B_y(t-r-s)\big)\\
&=& \bigvee_{s\in \mathbb Q}(B_x(s)\wedge B_{z+y}(t-s))\\
&=& \bigvee_{s\in \mathbb Q} (B_x(s)\wedge B_{y+z}(t-s))\\
&=& B_{x+(y+z)}(t).
\end{eqnarray*}

Finally, $B_{o+x}(t)= \bigvee_{r\le 0}(B_0(r)\wedge B_x(t-r) \vee \bigvee_{r>0}(B_0(r)\wedge B_x(t-r) = 0 \vee \bigvee_{r>0}(1\wedge B_x(t-r))=B_x(t)$.
\end{proof}

\begin{example}\label{ex:3.6}
Let $q_a$ be the question observable corresponding to an element $a$ of a $\sigma$-MV-effect algebra $E$ and $n\ge 1$ be an integer. The spectral resolution of $q_a$ is
$$
B_{q_a}(t)= \left\{\begin{array}{ll} 0 & \mbox{if} \ t\le 0,\\
a' & \mbox{if}\ 0< t\le 1,\\
1 & \mbox{if}\ 1<t
\end{array}
\right.
%\eqno(3.5)
$$
for $t \in \mathbb R$. If $x_i=q_a$ for $i=1,\ldots,n$, then for $z:=x_1+\cdots+x_n$, we have $z= f_n\circ q_a$, where $f_n(t)=nt$ for each $t \in \mathbb R$. We can write $z=nq_a$. In addition, if we set $f(t)=-t$, $t \in \mathbb R$, then for $-q_a:=f\circ q_a$, we have
$$
B_{q_a+(-q_a)}(t)= \left\{\begin{array}{ll} 0 & \mbox{if} \ t\le -1,\\
a'\wedge a & \mbox{if}\ -1< t\le 0,\\
a'\vee a & \mbox{if}\ 0<t\le 1,\\
1& \mbox{if}\ 1<t,
\end{array}
\right.
%\eqno(3.5)
$$
and $q_a+(-q_a)=o$ if and only if $a\wedge a'=0$.
\end{example}

\section{Lattice and Sum Properties of Observables}%4

In the present section, we study lattice properties of bounded observables with respect to the Olson ordering. We show that under this order and the sum of observables, the set of bounded observables is a lattice-ordered semigroup and the set of bounded sharp observables is even a Dedekind $\sigma$-complete $\ell$-group with strong unit.

According to \cite{286}, we introduce on the set of observables of a $\sigma$-complete effect algebra a partial ordering, called the {\it Olson order}, $\preceq_s$ which was motivated by an ordering of Hermitian operators introduced originally by Olson \cite{Ols}. Thus, we write $x \preceq_s y$ iff $B_y(t)\le B_x(t)$ for each $t \in \mathbb R$. In \cite[Thm 3.6]{286}, there was proved that the set of bounded observables of a complete lattice effect algebra is a Dedekind complete lattice under the Olson order. Moreover, there was proved \cite[Lem 3.3]{286}:

\begin{lemma}\label{le:3.7}
Let $\{x_\alpha: \alpha \in A\}$ be a system of bounded observables on a complete lattice effect algebra $E$ such that there is a bounded observable $y$ on $E$ which is a lower bound of $\{x_\alpha: \alpha \in A\}$. Define
$$
B_x(t):= \bigvee_\alpha B_{x_\alpha}(t),\ t \in \mathbb R. \eqno(4.1)
$$
Then $\{B_x(t): t \in \mathbb R\}$ is the spectral resolution of $x = \bigwedge_\alpha x_\alpha$.
\end{lemma}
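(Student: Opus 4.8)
The plan is to verify that the family $\{B_x(t):t\in\mathbb R\}$ defined by (4.1) satisfies the three conditions (3.2)--(3.4) of Theorem \ref{th:2.2}, so that it is the spectral resolution of a genuine observable $x$, and then to show that this $x$ is precisely the infimum $\bigwedge_\alpha x_\alpha$ under the Olson order $\preceq_s$. The two halves are logically independent: the first establishes that $x$ exists, the second identifies it as the meet. Throughout I will use completeness of $E$ to guarantee that the joins $\bigvee_\alpha B_{x_\alpha}(t)$ exist in $E$.

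First I would check (3.2)--(3.4). Monotonicity (3.2) is immediate, since each $B_{x_\alpha}$ is monotone and a supremum of monotone families is monotone. For (3.3), the existence of the lower bound $y$ is essential: because $y\preceq_s x_\alpha$ means $B_{x_\alpha}(t)\le B_y(t)$ for all $\alpha$ and $t$, we get $B_x(t)=\bigvee_\alpha B_{x_\alpha}(t)\le B_y(t)$, whence $\bigwedge_t B_x(t)\le\bigwedge_t B_y(t)=0$; the join $\bigvee_t B_x(t)=1$ follows since $B_x(t)\ge B_{x_{\alpha_0}}(t)$ for any fixed $\alpha_0$ and the latter already joins to $1$. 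The left-continuity condition (3.4) is where I expect the real work: I must show $\bigvee_{t<s}B_x(t)=B_x(s)$, i.e. $\bigvee_{t<s}\bigvee_\alpha B_{x_\alpha}(t)=\bigvee_\alpha B_{x_\alpha}(s)$. Here I would interchange the two suprema (valid for arbitrary joins in a complete lattice) to rewrite the left side as $\bigvee_\alpha\bigl(\bigvee_{t<s}B_{x_\alpha}(t)\bigr)$, and then apply (3.4) for each individual $x_\alpha$ to reduce the inner join to $B_{x_\alpha}(s)$, recovering $\bigvee_\alpha B_{x_\alpha}(s)=B_x(s)$.

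Having produced the observable $x$, I would verify it is the Olson meet. That $x$ is a lower bound is definitional: $B_x(t)=\bigvee_\alpha B_{x_\alpha}(t)\ge B_{x_\beta}(t)$ for every $\beta$ and every $t$, which is exactly $x\preceq_s x_\beta$. For maximality among lower bounds, suppose $w$ is any bounded observable with $w\preceq_s x_\alpha$ for all $\alpha$, i.e. $B_{x_\alpha}(t)\le B_w(t)$ for all $\alpha,t$. Taking the join over $\alpha$ gives $B_x(t)=\bigvee_\alpha B_{x_\alpha}(t)\le B_w(t)$ for each $t$, which says precisely $w\preceq_s x$. Thus $x$ dominates every lower bound, so $x=\bigwedge_\alpha x_\alpha$.

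The main obstacle is the continuity step (3.4): the interchange of the two joins and the term-by-term application of (3.4) must be justified, and the order-reversing nature of the correspondence $x\mapsto B_x$ (where $\preceq_s$ corresponds to $\ge$ on spectral resolutions, so infima of observables become suprema of resolutions) has to be tracked carefully to be sure that the constructed $x$ is the \emph{greatest} lower bound rather than merely some lower bound. Since $E$ is a complete lattice, the supremum interchange is unconditionally valid, so no further hypothesis beyond completeness and the existence of $y$ is needed.
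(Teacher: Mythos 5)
Your proof is correct. Note that the paper itself gives no argument for Lemma \ref{le:3.7}: it is quoted verbatim from \cite[Lem 3.3]{286}, so there is nothing internal to compare against, and your verification --- conditions (3.2)--(3.4) via the unconditional interchange of arbitrary joins in a complete lattice, the lower bound $y$ to force $\bigwedge_t B_x(t)=0$, termwise left-continuity for (3.4), followed by the two-sided universal-property check under $\preceq_s$ --- is exactly the standard argument that the citation stands for. One small point worth recording explicitly: since the meet is intended in $\mathcal{BO}(E)$, you should note that $x$ is itself bounded, which is immediate from the inequalities you already derived, $B_{x_{\alpha_0}}(t)\le B_x(t)\le B_y(t)$ for all $t$ (i.e. $y\preceq_s x\preceq_s x_{\alpha_0}$ with $y$ and $x_{\alpha_0}$ bounded).
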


Analogously, by \cite[Lem 3.5]{286},

\begin{lemma}\label{le:3.8}
Let $\{x_\alpha: \alpha \in A\}$ be a system of bounded observables on a complete lattice effect algebra $E$ such that there is a bounded observable $y$ on $E$ which is an upper bound of $\{x_\alpha: \alpha \in A\}$. Define
$$
B_x(t):= \bigvee_{u<t} \bigwedge_\alpha B_{x_\alpha}(u),\ t \in \mathbb R. \eqno(4.2)
$$
Then $\{B_x(t): t \in \mathbb R\}$ is the spectral resolution of $x = \bigvee_\alpha x_\alpha$.
\end{lemma}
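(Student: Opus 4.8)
The plan is to verify that the family $\{B_x(t)\}$ defined by (4.2) satisfies the spectral-resolution axioms (3.2)--(3.4), so that Theorem \ref{th:2.2} produces a unique observable $x$, and then to check directly that this $x$ is the Olson supremum of $\{x_\alpha\}$. Throughout I write $C(u):=\bigwedge_\alpha B_{x_\alpha}(u)$ for the pointwise meet, which exists because $E$ is a complete lattice, so that $B_x(t)=\bigvee_{u<t}C(u)$. The role of the outer join over $u<t$ is precisely to restore the left-continuity (3.4) that the raw meet $C$ need not satisfy on its own, whereas in Lemma \ref{le:3.7} formula (4.1) required no such correction.

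First I would dispose of (3.2) and (3.4), which are purely order-theoretic manipulations of joins. Monotonicity (3.2) is immediate since enlarging $t$ enlarges the index set $\{u:u<t\}$. For (3.4) I would compute $\bigvee_{t<s}B_x(t)=\bigvee_{t<s}\bigvee_{u<t}C(u)=\bigvee_{u<s}C(u)=B_x(s)$, the middle equality holding because every $u<s$ admits some $t$ with $u<t<s$, while conversely $u<t<s$ forces $u<s$. No distributive law (3.5) enters here, which is consistent with the hypothesis being merely ``complete lattice effect algebra.''

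The crux is (3.3), and this is where both hypotheses are used. The key inequality is that $C(u)\le B_{x_\alpha}(u)$ yields, via (3.4) applied to each $x_\alpha$,
$$B_x(t)=\bigvee_{u<t}C(u)\le\bigvee_{u<t}B_{x_\alpha}(u)=B_{x_\alpha}(t),\qquad \alpha\in A,\ t\in\mathbb R.$$
Taking the meet over $t$ and using $\bigwedge_t B_{x_\alpha}(t)=0$ for a single $\alpha$ gives $\bigwedge_t B_x(t)=0$; the same inequality for one fixed $\alpha_0$ also shows $B_x$ vanishes for small $t$, so $x$ is bounded below. For $\bigvee_t B_x(t)=1$ I would invoke the upper bound $y$: from $x_\alpha\preceq_s y$ we get $B_y(u)\le B_{x_\alpha}(u)$ for every $\alpha$, hence $B_y(u)\le C(u)$, and therefore $\bigvee_u C(u)\ge\bigvee_u B_y(u)=1$; since $y$ is bounded, $B_y(\beta)=1$ for some finite $\beta$, forcing $C(\beta)=1$ and thus $B_x(t)=1$ for $t>\beta$, so $x$ is bounded above as well. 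I expect this top condition to be the main obstacle, since without an upper bound the meet $C$ can stay strictly below $1$ at every level and (4.2) would then fail to define an observable at all.

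Finally I would identify $x$ with the supremum. The displayed inequality $B_x(t)\le B_{x_\alpha}(t)$ is exactly $x_\alpha\preceq_s x$, so $x$ is an upper bound. For minimality, let $z$ be any upper bound, so $B_z(u)\le B_{x_\alpha}(u)$ for all $\alpha$, whence $B_z(u)\le C(u)$; applying (3.4) to $z$ gives $B_z(t)=\bigvee_{u<t}B_z(u)\le\bigvee_{u<t}C(u)=B_x(t)$, that is, $x\preceq_s z$. Hence $x$ lies $\preceq_s$-below every upper bound, so $x=\bigvee_\alpha x_\alpha$ in the Olson order, as claimed.
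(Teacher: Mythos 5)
Your proof is correct. Note that the paper itself gives no proof of this lemma: it is imported verbatim from \cite[Lem 3.5]{286}, so there is nothing internal to compare against. Your verification --- checking (3.2)--(3.4) for $B_x(t)=\bigvee_{u<t}\bigwedge_\alpha B_{x_\alpha}(u)$, using the lower observables for $\bigwedge_t B_x(t)=0$ and the upper bound $y$ for $\bigvee_t B_x(t)=1$ and boundedness, then confirming via the reversed Olson inequalities that $x$ is an upper bound lying below every upper bound --- is exactly the standard argument one would expect in the cited source, and it correctly identifies both the role of the outer join over $u<t$ (restoring left-continuity, which the raw meet need not satisfy) and the necessity of the hypothesis on $y$ (without it the meet can collapse and (3.3) fails).
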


If the index set $A$ in the foregoing lemma is finite, (4.2) can be calculated simpler as it follows from the following lemma.

\begin{lemma}\label{le:3.8*}
{\rm (1)} Let $E$ be a $\sigma$-lattice effect algebra with $(3.5)$. Let $\{a_i:i \in I\}$ and $\{b_i:i \in I\}$ be two systems of elements of $E$ such that $I$ is linearly ordered and countable, and if $i,j\in I$, $i\le j$, then $a_i\le a_j$ and $b_i\le b_j$. If $a=\bigvee_i a_i$ and $b=\bigvee_i b_j$, then $\bigvee_i (a_i\wedge b_i)=a\wedge b$.

{\rm (2)} If $x$ and $y$ are two bounded observables of a $\sigma$-effect algebra $E$, then
$$
 B_{x\vee y}(t)= B_x(t)\wedge B_y(t),\quad t \in \mathbb R. \eqno(4.3)
$$
\end{lemma}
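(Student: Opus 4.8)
**Proof plan for Lemma \ref{le:3.8*}.**

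The statement has two parts, and the natural strategy is to prove part (1) first as a lattice-theoretic fact and then derive part (2) from it by applying it to the spectral resolutions of $x$ and $y$.

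For part (1), the plan is to prove the two inequalities $\bigvee_i(a_i\wedge b_i)\le a\wedge b$ and $a\wedge b\le \bigvee_i(a_i\wedge b_i)$ separately. The first is immediate: for each $i$ we have $a_i\wedge b_i\le a\wedge b$ since $a_i\le a$ and $b_i\le b$, so the supremum of the left-hand sides is dominated by $a\wedge b$. The reverse inequality is where the monotonicity of both chains and the distributive law (3.5) must be exploited. The idea is to compute, using (3.5) twice,
\begin{eqnarray*}
a\wedge b &=& \Big(\bigvee_i a_i\Big)\wedge b = \bigvee_i\big(a_i\wedge b\big)
= \bigvee_i\Big(a_i\wedge \bigvee_j b_j\Big)
= \bigvee_i\bigvee_j\big(a_i\wedge b_j\big).
\end{eqnarray*}
Now, because $I$ is linearly ordered and both families are monotone, for any pair $i,j$ we have $a_i\wedge b_j\le a_k\wedge b_k$ with $k=\max\{i,j\}$: indeed if $i\le j$ then $a_i\le a_j=a_k$ and $b_j=b_k$, so $a_i\wedge b_j\le a_j\wedge b_j$, and symmetrically when $j\le i$. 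Hence every term $a_i\wedge b_j$ of the double supremum is bounded above by a diagonal term $a_k\wedge b_k$, which gives $a\wedge b\le\bigvee_k(a_k\wedge b_k)$. Combining the two inequalities yields the claim. The countability of $I$ is what guarantees the suprema invoked in (3.5) are legitimate in a $\sigma$-lattice.

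For part (2), the plan is to identify the two chains to which part (1) applies. Fix $t\in\mathbb R$ and consider the decreasing parameter as we let $u\nearrow t$. By Lemma \ref{le:3.8} applied to the finite (two-element) family $\{x,y\}$, we have $B_{x\vee y}(t)=\bigvee_{u<t}\big(B_x(u)\wedge B_y(u)\big)$. Taking a countable chain $u_n\nearrow t$ (e.g. rationals increasing to $t$), the sequences $a_n:=B_x(u_n)$ and $b_n:=B_y(u_n)$ are monotone by (3.2), countable, and indexed by a linearly ordered set, and by (3.4) their suprema are $\bigvee_n B_x(u_n)=B_x(t)$ and $\bigvee_n B_y(u_n)=B_y(t)$. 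Part (1) then gives $\bigvee_n\big(B_x(u_n)\wedge B_y(u_n)\big)=B_x(t)\wedge B_y(t)$, and since this supremum over the cofinal chain agrees with $\bigvee_{u<t}$, we obtain (4.3).

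The main obstacle is the reverse inequality in part (1): the naive bound $a\wedge b\le a_i\wedge b_i$ is false termwise, so one cannot proceed by a direct comparison. The crux is the double-supremum expansion via (3.5) together with the \emph{diagonalization} step that replaces each off-diagonal term $a_i\wedge b_j$ by a diagonal one using the linear order on $I$ and the simultaneous monotonicity of both families. Everything else is routine, but this interplay between the distributive law and the order structure of the index set is precisely where the hypotheses are consumed, and it is worth stating the comparison $a_i\wedge b_j\le a_{\max\{i,j\}}\wedge b_{\max\{i,j\}}$ explicitly.
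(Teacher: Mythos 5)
Your proposal is correct and follows essentially the same route as the paper: for part (1) the paper also expands $a\wedge b=\bigvee_i\bigvee_j(a_i\wedge b_j)$ via (3.5) and then uses the linearity of $I$ to reduce to the diagonal terms (phrased there through upper bounds of $\{a_i\wedge b_i\}$ rather than your termwise comparison $a_i\wedge b_j\le a_{\max\{i,j\}}\wedge b_{\max\{i,j\}}$, which is the same idea). For part (2) the paper says only ``it follows from (1)'', and your derivation via the formula of Lemma \ref{le:3.8} together with a countable cofinal chain $u_n\nearrow t$ and (3.4) is exactly the natural filling-in of that step.
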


\begin{proof}
(1) We have
$$
\bigvee_i\bigvee_j (a_i\wedge b_j)= \bigvee_i(a_i\wedge \bigvee_j b_j)=
\bigvee_i (a_i\wedge b)=a\wedge b.
$$

Of course, $a_i \wedge b_i \le a\wedge b$ for each $i \in I$. Let $a_i\wedge b_i \le c$ for each $i \in I$ and  let $i,j\in I$ be arbitrary. Since $i\le j$ or $j\le i$, we have $a_i\wedge b_j \le c$ which entails $a\wedge b\le c$, and $\bigvee_i (a_i\wedge b_i)=a\wedge b$.

(2) It follows from (1).
\end{proof}

Summarizing the above facts, we show that if $E$ is a $\sigma$-MV-effect algebra, then $\mathcal{BO}(E)$ is in fact a lattice-ordered semigroup with respect to the Olson order. First we note that according to \cite{Fuc}, a semigroup $E$ with a partial ordering $\preceq$ is (i) a {\it partially ordered semigroup} if $x\preceq y$ implies $x+z\preceq y+z$ for every $x,y,z\in E$, see \cite[p. 153]{Fuc}, and (ii) a {\it lattice-ordered semigroup} if $E$ is a lattice such that $ (x\vee y)+z = (x+z)\vee (y+z)$, \cite[p. 191]{Fuc}.

\begin{theorem}\label{th:3.9}
The set $\mathcal{BO}(E)$ of bounded observables of a $\sigma$-MV-effect algebra $E$ is a distributive lattice and a lattice-ordered semigroup with respect to sum of observables and the Olson order.
\end{theorem}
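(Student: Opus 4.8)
The plan is to exploit the fact that the Olson order is \emph{antitone} with respect to spectral resolutions: since $x \preceq_s y$ means $B_y(t)\le B_x(t)$ for all $t$, the bijection $x\mapsto\{B_x(t)\}$ of Theorem~\ref{th:2.2} reverses order, so joins in $(\mathcal{BO}(E),\preceq_s)$ should correspond to pointwise meets of resolutions and meets to pointwise joins. The commutative monoid structure on $\mathcal{BO}(E)$ (associativity, commutativity, neutral element $o$) is already supplied by Proposition~\ref{pr:3.5}, so it remains to establish the lattice, its distributivity, and the two compatibility laws. First I would exhibit the lattice operations explicitly. For the join, Lemma~\ref{le:3.8*}(2) already gives that $t\mapsto B_x(t)\wedge B_y(t)$ is a spectral resolution, and one checks it yields the least upper bound: $z\succeq_s x,y$ iff $B_z(t)\le B_x(t)$ and $B_z(t)\le B_y(t)$ for all $t$, i.e. $B_z(t)\le B_x(t)\wedge B_y(t)$, and the largest admissible resolution here is exactly $B_x(t)\wedge B_y(t)$. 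Dually, for the meet I would verify that $t\mapsto B_x(t)\vee B_y(t)$ satisfies (3.2)--(3.4): monotonicity is clear, boundedness of $x,y$ forces $\bigwedge_t(B_x(t)\vee B_y(t))=0$, and (3.4) holds since $\bigvee_{s<t}(B_x(s)\vee B_y(s))=(\bigvee_{s<t}B_x(s))\vee(\bigvee_{s<t}B_y(s))=B_x(t)\vee B_y(t)$, so it defines $x\wedge y$ with $B_{x\wedge y}(t)=B_x(t)\vee B_y(t)$.

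Distributivity of the lattice then reduces to distributivity of $E$ itself. Since every MV-effect algebra is a distributive lattice, for each fixed $t$ we have $(B_x(t)\wedge B_y(t))\vee B_z(t)=(B_x(t)\vee B_z(t))\wedge(B_y(t)\vee B_z(t))$; reading this through the formulas above gives $B_{(x\vee y)\wedge z}(t)=B_{(x\wedge z)\vee(y\wedge z)}(t)$ for all $t$, whence $(x\vee y)\wedge z=(x\wedge z)\vee(y\wedge z)$ by Theorem~\ref{th:2.2}. The partially ordered semigroup law is immediate from (3.6): if $x\preceq_s y$, i.e. $B_y(r)\le B_x(r)$ for all $r$, then $B_y(r)\wedge B_z(t-r)\le B_x(r)\wedge B_z(t-r)$ for each $r\in\mathbb Q$, and taking joins yields $B_{y+z}(t)\le B_{x+z}(t)$, that is $x+z\preceq_s y+z$.

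The main obstacle is the lattice-ordered semigroup identity $(x\vee y)+z=(x+z)\vee(y+z)$; by Theorem~\ref{th:2.2} it suffices to prove equality of resolutions at every $t$. Using $B_{x\vee y}(r)=B_x(r)\wedge B_y(r)$ and (3.6), the left-hand side has resolution $\bigvee_{r\in\mathbb Q}(B_x(r)\wedge B_y(r)\wedge B_z(t-r))$, while by (4.3) the right-hand side has resolution $B_{x+z}(t)\wedge B_{y+z}(t)$. One inequality is immediate: each term $B_x(r)\wedge B_y(r)\wedge B_z(t-r)$ lies below both $B_{x+z}(t)$ and $B_{y+z}(t)$, so $\bigvee_r(B_x(r)\wedge B_y(r)\wedge B_z(t-r))\le B_{x+z}(t)\wedge B_{y+z}(t)$. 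For the converse I would expand the meet using the distributive law (3.5) twice to get $B_{x+z}(t)\wedge B_{y+z}(t)=\bigvee_{r}\bigvee_{s}(B_x(r)\wedge B_z(t-r)\wedge B_y(s)\wedge B_z(t-s))$. The decisive step is that monotonicity of the resolutions collapses this double join: writing $m=\max(r,s)$, we have $B_z(t-r)\wedge B_z(t-s)=B_z(t-m)$ and $B_x(r)\wedge B_y(s)\le B_x(m)\wedge B_y(m)$, so every summand is dominated by the single term $B_x(m)\wedge B_y(m)\wedge B_z(t-m)$ occurring in the left-hand resolution. Hence $B_{x+z}(t)\wedge B_{y+z}(t)\le\bigvee_r(B_x(r)\wedge B_y(r)\wedge B_z(t-r))$, the two resolutions coincide for all $t$, and the identity follows.

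I expect this monotone collapse of the double join to a single one — precisely where the distributive law (3.5) and the linear order on $\mathbb Q$ are genuinely used — to be the technical heart of the argument; once it is in place, the lattice-ordered semigroup structure of $\mathcal{BO}(E)$ is complete.
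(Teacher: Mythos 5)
Your proposal is correct and follows essentially the same route as the paper: the monoid structure from Proposition~\ref{pr:3.5}, the lattice via pointwise meets/joins of spectral resolutions (Lemmas~\ref{le:3.7}--\ref{le:3.8*}), distributivity read off pointwise from distributivity of $E$, the partially ordered semigroup law directly from (3.6), and the identity $(x\vee y)+z=(x+z)\vee(y+z)$ by expanding $B_{x+z}(t)\wedge B_{y+z}(t)$ with (3.5) into a double join and collapsing it onto the diagonal using the linear order on $\mathbb Q$. Your collapse via $m=\max(r,s)$ and two inequalities is just a slightly cleaner packaging of the paper's case split into $s\le r$ and $s>r$; the underlying idea is identical.
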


\begin{proof}
(i) Due to Proposition \ref{pr:3.5}, we see that the sum of bounded observable, $+$ defined by (3.6), is commutative, associative with the neutral element $o$. According to \cite[Thm 3.6]{286}, see also Lemmas \ref{le:3.7}--\ref{le:3.8}, we see that $\mathcal{BO}(E)$ is a lattice under the Olson order $\preceq_s$.

(ii) We establish the following form of the distributive law on $\mathcal{BO}(E)$ $(x\vee y)\wedge z= (x\wedge z)\vee (y\wedge z)$ for all $x,y,z \in \mathcal{BO}(E)$.

By (4.1) and (4.2) we have $B_{x\vee y}(t)=\bigvee_{u<t}(B_x(u)\wedge B_z(u))$, $B_{(x\vee y)\wedge z}(t) = B_{x\vee y}(t)\vee B_z(t)$, and $B_{x\wedge z}(t) = B_x(t)\vee B_z(t)$, $B_{y\wedge z}(t)=B_y(t)\vee B_z(t)$. Using (4.3), we have $B_{(x\wedge z)\vee (y\wedge z)}(t)= B_{x\wedge z}(t)\wedge B_{y\wedge z}(t)= (B_x(t)\vee B_z(t))\wedge (B_y(t)\vee B_z(t))= (B_x(t)\wedge B_y(t))\vee B_z(t)= B_{x\vee y}(t) \vee B_z(t)= B_{(x\vee y)\wedge z}(t)$.

The proof the second distributivity law $(x\wedge y)\vee z = (x\vee z)\wedge (y\vee z)$ follows from the first one if we define $g(t)=-t$, $-x:=g\circ x$. Then (i) $-(-x)=x$ for each observable $x$ on $E$, (ii)  $x\preceq_s y$ iff $-y\preceq_s -x$, (iii) $-(x\vee y)=-x\wedge -y$ and $-(x\wedge y)=-x\vee -y$.

(iii) Now we show that if, for two  bounded observables $x$ and $y$ on $E$, we have $x\preceq_s y$, then $x+z\preceq_s y+z$ for each bounded observable $z$ on $E$. Indeed, if $x \preceq_s y$, then $B_y(t)\le B_x(t)$ for each $t \in \mathbb R$. Using (3.6), we have $B_{y+z}(t)=\bigvee_r (B_y(r) \wedge B_z(t-r)) \le \bigvee_r (B_x(r) \wedge B_z(t-r))= B_{x+z}(t)$, which implies $x+y\preceq_s y+z$, that is, $\mathcal{BO}(E)$ is a partially ordered semigroup.

(iv) To prove that $\mathcal{BO}(E)$ is a lattice-ordered semigroup, we have to show that $ (x\vee y)+z = (x+z)\vee (y+z)$.  Use (4.3) and check
$$
B_{(x\vee y)+z(t)}=\bigvee_{r}(B_{x\vee y}(r)\wedge B_z(t-r))\\
=\bigvee_{r}(B_x(r)\wedge B_y(r)\wedge B_z(t-r)
$$
and
\begin{eqnarray*}
B_{(x+z)\vee (y+z)}(t)&=& B_{x+y}(t)\wedge B_{y+z}(t)\\
&=&\big(\bigvee_{r }(B_x(r)\wedge B_y(t-r))\big)\wedge \big(\bigvee_{s }(B_x(s)\wedge B_y(t-s))\big)\\
&=&\bigvee_{r }\bigvee_{s } (B_x(r)\wedge B_y(t-r)\wedge B_y(s)\wedge B_z(t-s))\\
&=& \bigvee_{r}\bigvee_{s\le r } (B_x(r)\wedge B_y(t-r)\wedge B_y(s)\wedge B_z(t-s))=:B_1\\
&\vee& \bigvee_{r}\bigvee_{s>r } (B_x(r)\wedge B_y(t-r)\wedge B_y(s)\wedge B_z(t-s))=:B_2.
\end{eqnarray*}
Then
\begin{eqnarray*}
B_1&=& \bigvee_{r }\bigvee_{s \le r} (B_x(r)\wedge B_y(t-r)\wedge B_y(s))= \bigvee_r (B_x(r)\wedge B_y(t-r)\wedge B_y(r)),\\
B_2&=& \bigvee_s \bigvee_{r<s}B_x(r)\wedge B_y(s)\wedge B_z(t-s)
= \bigvee_s (B_x(s)\wedge B_y(s)\wedge B_z(t-s)).
\end{eqnarray*}
Since $B_1=B_2$, we see that $B_{(x\vee y)+z(t)}=B_{(x+z)\vee (y+z)}(t)$, that is, $\mathcal{BO}(E)$ is a lattice-ordered semigroup.
\end{proof}

We note that if $E$ is a $\sigma$-lattice effect algebra that is not an MV-effect algebra, then $x\preceq_s y$ does not implies necessarily that $x+z\preceq_s y+z$ for each bounded observable $z$. This was already mentioned for the case $\mathcal L(H)$, see \cite{Ols}.

We say that an element $a$ of an effect algebra $E$ is {\it sharp} if $ a\wedge a'$ exists in $E$ and $a\wedge a'=0$. We denote by $\Sh(E)$ the set of sharp elements of $E$. We have (i) $0,1\in \Sh(E),$ (ii) if $a \in \Sh(E)$, then $a'\in \Sh(E)$. If $E$ is a lattice effect algebra, then $\Sh(E)$ is an orthomodular lattice which is a subalgebra and a sublattice of $E$,  \cite{JeRi}. In addition, if $E$ is a $\sigma$-lattice effect algebra, then  $\Sh(E)$ is a $\sigma$-sublattice effect algebra, see \cite[Thm 4.1]{289}. If an effect algebra $E$ satisfies RDP, then by \cite[Thm 3.2]{Dvu2}, $\Sh(E)$ is even a Boolean algebra, in addition, if $E$ is a monotone $\sigma$-complete effect algebra with RDP, then $\Sh(E)$ is a Boolean $\sigma$-lattice, \cite[Thm 5.11]{Dvu2}. The latter case happens, for example, if $E$ is a $\sigma$-MV-effect algebra.

An observable $x$ is said to be {\it sharp} if $x(A) \in \Sh(E)$ for each $A \in \mathcal B(\mathbb R)$. An analogue of the following proposition was established in \cite[Prop 4.3]{289}. We denote by $\mathcal {SBO}(E)$ the set of bounded sharp observables on $E$.

\begin{proposition}\label{pr:3.10}
Let $x$ be an observable on a $\sigma$-MV-effect algebra $E$. Then $x$ is a sharp observable if and only if $B_x(t) \in \Sh(E)$ for each $t \in \mathbb R$.
\end{proposition}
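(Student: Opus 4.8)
The plan is to prove the two implications separately. The forward direction is immediate, since the half-lines are Borel sets: if $x$ is sharp then $x(A)\in\Sh(E)$ for every $A\in\mathcal B(\mathbb R)$, and in particular $B_x(t)=x((-\infty,t))\in\Sh(E)$ for each $t\in\mathbb R$. All the content is in the converse.

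For the converse, assume $B_x(t)\in\Sh(E)$ for every $t\in\mathbb R$ and consider the family
\[
\mathcal M:=\{A\in\mathcal B(\mathbb R)\colon x(A)\in\Sh(E)\}.
\]
I would prove $\mathcal M=\mathcal B(\mathbb R)$ by a Dynkin ($\pi$--$\lambda$) argument. The half-lines $\mathcal G:=\{(-\infty,t)\colon t\in\mathbb R\}$ form a $\pi$-system, since $(-\infty,s)\cap(-\infty,t)=(-\infty,\min\{s,t\})$, and they generate $\mathcal B(\mathbb R)$; by hypothesis $\mathcal G\subseteq\mathcal M$. Hence it suffices to check that $\mathcal M$ is a $\lambda$-system, after which the $\pi$--$\lambda$ theorem forces $\mathcal M\supseteq\sigma(\mathcal G)=\mathcal B(\mathbb R)$.

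To verify the $\lambda$-system axioms I would exploit that, for a $\sigma$-MV-effect algebra $E$, the set $\Sh(E)$ is a Boolean $\sigma$-lattice by \cite[Thm 5.11]{Dvu2}, hence closed under the complement $a\mapsto a'$, under finite meets, and under countable joins that exist in $E$. First, $\mathbb R\in\mathcal M$ because $x(\mathbb R)=1\in\Sh(E)$. Second, if $A\subseteq B$ with $A,B\in\mathcal M$, then from $x(B)=x(A)+x(B\setminus A)$ we obtain $x(B\setminus A)=x(B)-x(A)=x(B)\wedge x(A)'\in\Sh(E)$, using that for comparable sharp elements the effect-algebra difference equals the Boolean relative complement; thus $B\setminus A\in\mathcal M$. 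Third, if $A_n\nearrow A$ with each $A_n\in\mathcal M$, then axiom (iii) of an observable gives $x(A)=\bigvee_n x(A_n)$, an increasing join of sharp elements which exists in $E$, so $x(A)\in\Sh(E)$ and $A\in\mathcal M$. Therefore $\mathcal M$ is a $\lambda$-system and the argument closes.

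The main obstacle is precisely the stability of sharpness under the two operations a $\lambda$-system demands: forming differences $x(B)-x(A)$ of comparable images and forming suprema of increasing sequences of images. Both steps would fail for a general $\sigma$-lattice effect algebra; they succeed here only because $\Sh(E)$ is a Boolean $\sigma$-lattice, a fact guaranteed by the RDP enjoyed by MV-effect algebras. I would therefore invoke that structural fact exactly at these two points, rather than attempting an element-by-element verification over all Borel sets.
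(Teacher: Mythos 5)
Your proof is correct and takes essentially the same route as the paper: both apply the $\pi$--$\lambda$ (Dynkin/Sierpi\'nski) theorem to the family $\{A \in \mathcal B(\mathbb R) \colon x(A) \in \Sh(E)\}$, with the half-lines $(-\infty,t)$ as the generating $\pi$-system. The only difference is which form of the $\lambda$-system axioms gets verified: the paper checks closure under disjoint unions via a direct element-level RDP argument (any $a \le x(A\cup B),\, x((A\cup B)^c)$ decomposes as $a_1+a_2$ with $a_1 \le x(A)\wedge x(A)' = 0$ and likewise $a_2=0$), whereas you check closure under proper differences and increasing limits by invoking the structural facts that $\Sh(E)$ is a Boolean $\sigma$-lattice and a sub-effect-algebra and sublattice of $E$ closed under countable joins taken in $E$ --- equivalent formulations, and both verifications are sound.
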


\begin{proof}
One implication is evident. For the second one, suppose $B_x(t)$ is sharp for each $t \in \mathbb R$. Let $\mathcal K:=\{A \in \mathcal B(\mathbb R): x(A) \in \Sh(E)\}$. We assert that $\mathcal K$ is a Dynkin system, that is a system of subsets containing its universe which is closed under the set theoretical complements and countable unions of  disjoint subsets, \cite{Bau}. Assume $A,B \in \mathcal K$, $A\cap B = \emptyset$. Let $a \in E$ be such that $a \le x(A\cup B), x((A\cup B)^c)$. Due to RDP of $E$, there are $a_1,a_2\in E$ with $a_1 \le x(A)$ and $a_2\le x(B)$ such that $a=a_1 +a_2$. Then $a_1\le a_1+a_2\le (x(A)+x(B))'\le x(A)'$ which yields $a_1=0$, and in a similar way, we have $a_2=0$. Hence, $a=0$, $x(A\cup B) \in \Sh(E)$, and $A\cup B \in \mathcal K$.

Now let $\{A_i\}$ be a sequence of mutually disjoint Borel subsets from $\mathcal K$. By the first part of the present proof, $B_n=A_1\cup \cdots \cup A_n \in \mathcal K$. Applying \cite[Prop 4.3]{289}, we see that $A = \bigcup_n A_n = \bigcup_n B_n \in \mathcal K$.

Hence, $\mathcal K$ is clearly a Dynkin system containing all intervals of the form $(-\infty, t)$, $t \in \mathbb R$. These intervals form a $\pi$-system, i.e. intersection of any two sets from the $\pi$-system is from the $\pi$-system. Hence, by the Sierpi\'nski Theorem, \cite[Thm 1.1]{Kal}, $\mathcal K$ is a $\sigma$-algebra, which proves $\mathcal K= \mathcal B(\mathbb R)$.
\end{proof}

In what follows, we show that that $\mathcal{SBO}(E)$ is in fact an $\ell$-group under the Olson order with strong unit $q_1$.

\begin{theorem}\label{th:3.11}
The set $\mathcal{SBO}(E)$ of sharp bounded observables of a $\sigma$-MV-effect algebra $E$ is a unital $\ell$-group with strong unit $q_1$. In addition, it is a subsemigroup of the partially ordered semigroup $\mathcal{BO}(E)$ under the Olson order.
\end{theorem}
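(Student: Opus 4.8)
The plan is to exploit the fact, recorded just before the statement, that for a $\sigma$-MV-effect algebra $E$ the set $\Sh(E)$ is a Boolean $\sigma$-lattice, together with Proposition \ref{pr:3.10}, which says that a bounded observable lies in $\mathcal{SBO}(E)$ exactly when its spectral resolution takes values in $\Sh(E)$. Thus a sharp bounded observable is nothing but a bounded observable valued in the Boolean $\sigma$-algebra $\Sh(E)$, and the theorem should follow from the classical picture of observables on a Boolean $\sigma$-algebra as measurable functions. Concretely, I would first represent $\Sh(E)$ by the Loomis--Sikorski theorem as a quotient $\mathcal S/\mathcal I$ of a $\sigma$-algebra $\mathcal S$ of subsets of a set $\Omega$ by a $\sigma$-ideal $\mathcal I$, and then set up the bijection $x\mapsto f_x$ between $\mathcal{SBO}(E)$ and the essentially bounded $\mathcal S$-measurable functions modulo $\mathcal I$, determined via Theorem \ref{th:2.2} by the requirement $B_x(t)=[\{f_x<t\}]$ for all $t\in\mathbb R$.

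Second, I would check that this bijection transports all the relevant structure. The Olson order corresponds to the a.e.\ pointwise order, since $B_y(t)\le B_x(t)$ for every $t$ translates into $\{f_y<t\}\subseteq\{f_x<t\}$ for every $t$, i.e.\ into $f_x\le f_y$ a.e. The neutral element $o=q_0$ corresponds to the constant $0$, and $q_1$ to the constant $1$ (both computations agree with the spectral resolutions in Example \ref{ex:3.6}). The decisive compatibility is that the sum of observables defined by (3.6) corresponds to the ordinary sum of functions: formula (3.6) is precisely the Boolean-$\sigma$-algebra avatar of the measurable-function identity (3.0) for $\{f+g<t\}$, so $f_{x+y}=f_x+f_y$ a.e. Closure of $\mathcal{SBO}(E)$ under the sum is also visible directly from (3.6): each $B_x(r)\wedge B_y(t-r)$ lies in $\Sh(E)$, and $\Sh(E)$ is closed under countable joins, so $B_{x+y}(t)\in\Sh(E)$ and $x+y$ is again sharp by Proposition \ref{pr:3.10}. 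Likewise, by (4.3) and closure of $\Sh(E)$ under finite $\wedge$ and $\vee$, the lattice operations of $\mathcal{BO}(E)$ preserve sharpness, so $\mathcal{SBO}(E)$ is a sublattice.

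Third, with the structure transported, I would conclude as follows. The essentially bounded measurable functions modulo $\mathcal I$ form an abelian $\ell$-group under addition and the a.e.\ order, with strong unit the constant $1$, because every essentially bounded function is dominated a.e.\ by some integer constant. Pulling this back, $\mathcal{SBO}(E)$ is an abelian $\ell$-group with strong unit $q_1$; in particular the inverse of $x$ is $-x:=g\circ x$ with $g(t)=-t$, satisfying $x+(-x)=o$. Compatibility of $\preceq_s$ with $+$, and hence the subsemigroup assertion, is inherited from Theorem \ref{th:3.9} (the partially ordered semigroup property of $\mathcal{BO}(E)$), while the lattice operations are those already available in $\mathcal{BO}(E)$, restricted to $\mathcal{SBO}(E)$.

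The main obstacle I anticipate is the existence of inverses, i.e.\ establishing $x+(-x)=o$ for sharp $x$. Example \ref{ex:3.6} shows this identity fails for a non-sharp question, since $q_a+(-q_a)=o$ holds iff $a\wedge a'=0$; thus sharpness is genuinely needed, and a purely spectral-resolution computation of $B_{x+(-x)}(t)$ is delicate at the jump points of $x$. The function representation circumvents this cleanly, because $-f_x$ is the honest additive inverse of $f_x$. The price is that I must verify carefully that the Loomis--Sikorski bijection respects countable joins, so that (3.6) really becomes pointwise addition and bounded observables correspond exactly to essentially bounded functions; this faithfulness of the representation is the step deserving the most attention.
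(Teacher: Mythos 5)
Your proposal is correct, but it takes a genuinely different route from the paper. The paper works entirely inside $E$ with spectral resolutions: using RDP it first shows that for sharp $a,b$ with $a\wedge b=0$ the sum $a+b$ exists and equals $a\vee b$, deduces that a sharp observable satisfies $x(A\cup B)=x(A)\vee x(B)$ (hence turns countable unions into joins, and intersections into meets), and then computes $B_{x+(-x)}(t)$ directly from (3.6) to obtain $x+(-x)=o$; closure under $+$ via Proposition \ref{pr:3.10}, the strong-unit property of $q_1$ through the identity $f\circ x+g\circ x=(f+g)\circ x$, and the ordered-semigroup structure inherited from Theorem \ref{th:3.9} complete the argument. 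You instead reduce to the classical setting: since $\Sh(E)$ is a Boolean $\sigma$-algebra and Proposition \ref{pr:3.10} identifies $\mathcal{SBO}(E)$ with the bounded observables valued in $\Sh(E)$, you invoke Loomis--Sikorski together with the lifting theorem for $\sigma$-homomorphisms of $\mathcal B(\mathbb R)$ into a quotient $\sigma$-algebra (Sikorski, Varadarajan) to represent sharp bounded observables by essentially bounded measurable functions modulo a $\sigma$-ideal; then (3.6) becomes literal addition of functions by (3.0), the Olson order becomes the a.e.\ pointwise order, and the group axioms, including inverses, are immediate. Your route buys conceptual transparency and even yields Theorem \ref{th:3.12}(4) (Dedekind $\sigma$-completeness) for free, but at the price of external machinery the paper avoids: the ``faithfulness'' step you rightly single out is essentially the same rational-indexed spectral-resolution argument that underlies Theorem \ref{th:2.2} and the paper's direct computations, so the total work is comparable. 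Two points you should make explicit if you write this up: (a) you need $\Sh(E)$ to be not merely Boolean but a $\sigma$-sublattice of $E$ (so that the countable joins occurring in (3.6) and in Theorem \ref{th:2.2}, computed in $E$, agree with those computed in $\Sh(E)$); the paper has this from the cited result that $\Sh(E)$ is a $\sigma$-sublattice effect algebra; and (b) your method is intrinsically tied to RDP, since without it $\Sh(E)$ need not be Boolean, whereas the paper's intrinsic argument partially survives in that generality (Remark \ref{re:3.13}).
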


\begin{proof}
According to hypotheses, the set of sharp elements of $E$ is a Boolean $\sigma$-algebra, see \cite[Thm 5.11]{Dvu2}. If $x$ and $y$ are two bounded sharp observables, then by (3.6), every $B_{x+y}(t)$ is a sharp element and whence due to Proposition \ref{pr:3.10}, $x+y$ is a sharp bounded observable.

(i) Let $a,b $ be sharp elements of $E$ such that $a\wedge b = 0$. Then we assert that $a+ b$ is defined in $E$ and $a\vee b = a+b$. Indeed, let $c\le a,b$. There is an element $d\in E$ such that $c=b+d\ge a$. Due to RDP, $a = a_1+a_2$ where $a_1\le b$ and $a_2 \le d$. Then $a_1\le a,b$ so that $a_1=0$ and $a=a_2\le d$. This implies $a+b$ exists in $E$ in view of $c=d+b\ge a+b$. Since $a+b \ge a,b$, we see that $a+b=a\vee b$.

(ii) Let $x$ be a sharp observable on a $\sigma$-MV-effect algebra. If $A,B$ are disjoint Borel sets, then by the just proved statement, $x(A\cup B)=x(A)+x(B)=x(A)\vee x(B)$. If $A,B$ are arbitrary Borel sets, then $x(A\cup B)= x(A\setminus B) + x(A\cap B) +x(B\setminus A)=  x(A\setminus B) \vee x(A\cap B) \vee x(B\setminus A)= x(A)\vee x(B)$. Consequently, $x(\bigcup_n A_n)=\bigvee_n x(A_n)$ for each sequence $\{A_n\}$ of Borel sets.

(iii) We assert that $-x$ which is defined by $-x=f\circ x$, where $f(t)=-t$, $t\in \mathbb R$, satisfies $x+(-x)=o$ for each sharp bounded observable $x$. Indeed, $B_{x+(-x)}(t) = \bigvee_{r \in \mathbb Q}x(\{s: s<t\})\wedge x(\{s: -s<t-r\})=\bigvee_{r \in \mathbb Q} x(\{s: s<t\}\cap \{s: r-t<s\})= 0$ if $t\le 0$ and $B_{x+(-x)}(t) =1$ if $t>0$. In other words, $x+(-x)=-x+x=o$, and $-x$ is the inverse element of $x$.

Therefore, applying Theorem \ref{th:3.9}, we see that the set $\mathcal{SBO}(E)$ of sharp bounded observables on $E$ is an $\ell$-group. In addition, due to (4.1)--(4.2), it is a sublattice of $\mathcal{BO}(E)$. Of course, $\mathcal{SBO}(E)$ is a partially ordered subsemigroup of $\mathcal{BO}(E)$.

(iv) Let $x$ be a bounded observable and let $f,g$ be two bounded Borel measurable functions. Then $f\circ x + g\circ x=(f+g)\circ x$. Indeed, for each $t \in \mathbb R$, we have
\begin{eqnarray*}
B_{f\circ x+g\circ x}(t) &=& \bigvee_{r \in \mathbb Q}(B_{f\circ x}(r)\wedge B_{g\circ x}(t-r)) \\
&=&x\big(\bigcup_{r\in \mathbb Q}(\{s: f(s)<r\}\cap \{s: g(s)<t-r\})\\
&=& x(\{s: f(s)+g(s)<t\}) = B_{(f+g)\circ x}(t). \quad \mbox{see } (3.0)
\end{eqnarray*}

(v) Take the question observable $q_1$ and for each $n$, let $f_n(t)=nt$, $t \in \mathbb R$, $x_i=q_1$ for $i=1,\ldots,n$. We define $nq_a:= x_1+\cdots +x_n$. Applying (iv), we have $nq_a=f_n(q_1)$. We assert that $q_1$ is a strong unit in the $\ell$-group $\mathcal {SBO}(E)$. Indeed, let $x$ be a bounded sharp observable and let $[\alpha,\beta]$ be a closed interval such that $x([\alpha,\beta])=1$. Take an integer $n\ge 1$ such that $n>\beta$. Then $x \preceq_s nq_1$.
\end{proof}

\begin{theorem}\label{th:3.12}
Let $E$ be a $\sigma$-MV-effect algebra.
\begin{itemize}
\item[{\rm (1)}]
If $a$ and $b$ are two sharp elements such that $a+b$ exists in $E$, then $q_a+q_b=q_{a+b}$.

\item[{\rm (2)}] Let $x$ be a sharp observable such that $o\preceq_s x \preceq_s q_1$. If we set $x'=f\circ x $, where $f(t)=1-t$, then $o\preceq_s x'\preceq_s q_1$.
    In addition, $x\vee x'=q_1$ if and only if $x=q_a$ for some sharp element $a \in E$. In such a case, $a$ is unique.

\item[{\rm (3)}] If $M=\Gamma(\mathcal{SBO}(E),q_1)$, then $M$ is a $\sigma$-MV-effect algebra such that $\Sh(M)$ is $\sigma$-isomorphic to $\Sh(E)$.

\item[{\rm (4)}] The set $\mathcal{SBO}(E)$ is a Dedekind $\sigma$-complete $\ell$-group.
\end{itemize}
\end{theorem}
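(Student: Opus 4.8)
The plan is to treat the four items in turn, using throughout the spectral-resolution calculus of Theorem \ref{th:3.3}, the formula (4.3) for joins, and the fact that for a $\sigma$-MV-effect algebra $\Sh(E)$ is a Boolean $\sigma$-algebra which is a $\sigma$-sublattice of $E$.

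For (1) I would evaluate (3.6) on the spectral resolutions of $q_a$ and $q_b$ recorded in Example \ref{ex:3.6}. Splitting the supremum over $r$ into the ranges $r\le 0$, $0<r\le 1$, $r>1$, and doing likewise for $t-r$, one finds $B_{q_a+q_b}(t)=0$ for $t\le 0$, and it remains to compute the value on $(0,1]$ and on $(1,\infty)$. Here I would use that, since $a,b$ are sharp with $a+b$ defined, the argument of Theorem \ref{th:3.11}(i) yields $a\wedge b=0$ and $a+b=a\vee b$; hence in the Boolean algebra $\Sh(E)$ we have $a'\wedge b'=(a\vee b)'=(a+b)'$ and $a'\vee b'=(a\wedge b)'=1$. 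Substituting shows $B_{q_a+q_b}(t)$ equals $(a+b)'$ on $(0,1]$ and $1$ on $(1,\infty)$, i.e. it is the spectral resolution of $q_{a+b}$, so $q_a+q_b=q_{a+b}$.

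For (2) I would first observe that $o\preceq_s x\preceq_s q_1$ forces $B_x(t)=0$ for $t\le 0$ and $B_x(t)=1$ for $t>1$, equivalently $x([0,1])=1$; since $f([0,1])=[0,1]$, the observable $x'=f\circ x$ obeys $x'([0,1])=x(f^{-1}([0,1]))=1$, so $o\preceq_s x'\preceq_s q_1$. The ``if'' direction is a direct computation: if $x=q_a$ then $x'=q_{a'}$, and (4.3) with $a\wedge a'=0$, $a\vee a'=1$ gives $B_{x\vee x'}=B_{q_1}$. For the ``only if'' direction I would exploit that a sharp observable behaves Booleanly, so $x(A^c)=x(A)'$ and $x(A\cap B)=x(A)\wedge x(B)$, the latter by De Morgan from the identity $x(A\cup B)=x(A)\vee x(B)$ of Theorem \ref{th:3.11}(ii). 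Then $B_{x\vee x'}(t)=x((-\infty,t))\wedge x((1-t,\infty))=x((1-t,t))$, which must equal $B_{q_1}(t)=0$ for $0<t\le 1$; letting $t\uparrow 1$ and using continuity of $x$ gives $x((0,1))=0$, whence $x(\{0\})+x(\{1\})=1$ and $x$ is concentrated on $\{0,1\}$. Thus $x=q_a$ with $a=x(\{1\})$, and $a$ is unique because $B_{q_a}$ determines $a'$.

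For (3), $\mathcal{SBO}(E)$ is a unital $\ell$-group by Theorem \ref{th:3.11}, so $M=\Gamma(\mathcal{SBO}(E),q_1)=[o,q_1]$ is an MV-effect algebra by Mundici's theorem. I would first identify the MV-negation in $M$ with the map of (2): writing $g(t)=-t$ and $h(t)=t+1$, one checks from (3.4) that $x+q_1=h\circ x$, and since $f=h\circ g$ this gives $f\circ x=h\circ(g\circ x)=(g\circ x)+q_1=q_1+(-x)=q_1-x$. Hence the Boolean elements of $M$, those with $x\vee x'=q_1$, are by (2) exactly the questions $q_a$, $a\in\Sh(E)$; so $\Sh(M)=\{q_a:a\in\Sh(E)\}$ and $a\mapsto q_a$ preserves order, negation and, by (1), the partial sum, hence is a Boolean isomorphism. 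To see that $M$ is a $\sigma$-MV-effect algebra and that this map is a $\sigma$-isomorphism, I must produce suprema of bounded increasing sequences of sharp observables. I would construct the spectral resolution of such a supremum by the recipe of Lemma \ref{le:3.8}, namely $B(t)=\bigvee_{u<t,\,u\in\mathbb Q}\bigwedge_n B_{x_n}(u)$, observing that every meet and join here is countable and hence exists in the Boolean $\sigma$-algebra $\Sh(E)$, and verifying (3.2)--(3.4) with the distributive laws (3.5) and Lemma \ref{le:3.8*}(1); preservation of countable joins by $a\mapsto q_a$ then follows via Proposition \ref{pr:3.4}.

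Finally, (4) follows from (3): once $M=\Gamma(\mathcal{SBO}(E),q_1)$ is known to be monotone $\sigma$-complete, the standard correspondence between monotone $\sigma$-complete MV-algebras and Dedekind $\sigma$-complete unital $\ell$-groups yields that $\mathcal{SBO}(E)$ is Dedekind $\sigma$-complete; concretely, any sequence bounded above can be shifted by a multiple of $q_1$ into $[o,q_1]=M$, where its supremum exists and stays, and then shifted back. The main obstacle throughout is precisely this $\sigma$-completeness step, because Lemmas \ref{le:3.7}--\ref{le:3.8} are proved only for complete lattice effect algebras; the real work is to rerun the supremum construction using solely countable lattice operations, which the Boolean $\sigma$-completeness of $\Sh(E)$ together with (3.5) makes legitimate.
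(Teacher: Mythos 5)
Your proposal is correct, and its overall skeleton matches the paper's proof: (1) by direct spectral computation (the paper dismisses it as ``evident''), (2) by characterizing which sharp observables in $[o,q_1]$ satisfy $x\vee x'=q_1$, (3) via the map $\phi(a)=q_a$ together with a $\sigma$-completeness argument, and (4) by the correspondence between monotone $\sigma$-complete unit intervals and Dedekind $\sigma$-complete unital $\ell$-groups, i.e.\ \cite[Prop 16.9]{Goo}, which is exactly what the paper cites. The genuine differences are in (2) and (3). For (2), the paper argues via the identity $f\circ x\vee g\circ x=h\circ x$ with $h=\max\{f,g\}$, evaluates at the singleton $\{1\}$ to get $x(\{0,1\})=1$, and then invokes \cite[Lem 3.1]{289}; you instead derive the multiplicativity $x(A\cap B)=x(A)\wedge x(B)$ of sharp observables from Theorem \ref{th:3.11}(ii) and let $t\uparrow 1$ in $x((1-t,t))=0$. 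Both work; yours is self-contained, the paper's is shorter. For (3) the divergence matters more: the paper's step (vi) checks $\sigma$-completeness only for increasing sequences of \emph{questions} $q_{a_n}$, i.e.\ inside $\Sh(M)$, while you construct the supremum of an arbitrary increasing sequence in $M$ by re-running the recipe of Lemma \ref{le:3.8} with only countable meets and joins --- legitimate because $E$ is a $\sigma$-lattice obeying (3.5) and the relevant resolutions lie in the Boolean $\sigma$-algebra $\Sh(E)$. This is precisely what is needed to assert that $M$ itself is a $\sigma$-MV-effect algebra and then to apply \cite[Prop 16.9]{Goo} in (4); the paper leaves that step essentially implicit, so your route supplies a justification the paper glosses over (you also make explicit the identification $f\circ x=q_1-x$ of the MV-complement in $M$, which the paper uses tacitly). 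Three small blemishes, none fatal: in (1), that $a+b$ defined forces $a\wedge b=0$ needs the one-line remark $b\le a'$, hence $a\wedge b\le a\wedge a'=0$, since Theorem \ref{th:3.11}(i) assumes disjointness rather than deriving it; in (3), Proposition \ref{pr:3.4} is not the right reference for preservation of countable joins --- what is needed is the direct computation of the resolution of $\bigvee_n q_{a_n}$, which your countable version of Lemma \ref{le:3.8} already yields; and in (4), the remark that a bounded sequence can be shifted into $[o,q_1]$ oversimplifies the case of bounds $kq_1$ with $k>1$ (handled by the Riesz-decomposition argument inside \cite[Prop 16.9]{Goo}), though this is harmless since you also invoke the standard theorem.
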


\begin{proof}
(1) It is evident.

(2) If $f,g$ are Borel-measurable bounded observables and $x$ is an arbitrary bounded sharp observable, it is straightforward to show that $f\circ x \vee g\circ x= h\circ x$, where $h=\max\{f,g\}$.

Hence, if $o\preceq_s x \preceq_s q_1$ and $x$ is a sharp observable, assume $x\vee x'=q_1$. Then  $1=(x\vee x')(\{1\})=x(\{s: \max\{s,1-s\}=1\})=x(\{0,1\})$. By \cite[Lem 3.1]{289}, $x=q_a$ for some element $a \in E$. Since $x$ is sharp, then so is $a$. If $x=q_b$, then $a=b$ which proves the uniqueness of $a$.

The converse implication follows from (1).

(3) By Theorem \ref{th:3.11}, the set $\mathcal{SBO}(E)$ of bounded sharp observables is an $\ell$-group and $q_1$ is a strong unit of $\mathcal{SBO}(E)$. Then $M=\Gamma(\mathcal{SBO}(E),q_1)$ is an MV-effect algebra.

Let $x\in \Sh(M)$. Then $x\vee x'=q_1$ which by (2) implies $x=q_a$ for a unique sharp element $a\in E$. Therefore, the mapping $\phi: \Sh(E)\to \Sh(M)$ defined by $\phi(a)=q_a$, $a \in \Sh(E)$, is (i) injective and surjective, (ii) $\phi(a+b)=\phi(a)+\phi(b)$ if $a+b$ exists in $E$ (see (1)), (iii) $\phi(a')=\phi(a)'$ (iv) $\phi(a\vee b)=\phi(a)\vee \phi(b)$.

(v)  If $\phi(a)+\phi(b)$ is defined in $\Sh(M)$, by (2), there is a unique sharp element $c\in E$ such that $\phi(a)+\phi(b)=\phi(c)$. Then $q_a \preceq_s q_b'=q_{b'}$ which implies $a\le b'$ and $a+b$ exists in $E$. By (1)--(2), $a+b=c$.

Therefore, $\phi$ is an isomorphism of the Boolean effect algebras $\Sh(E)$ and $\Sh(M)$.

(vi) Let $q_{a_n}\preceq_s q_{a_{n+1}}$, $n \ge 1$. Then $a_n\in \Sh(E)$, $n\ge 1$, and
$\{a_n\}\nearrow a$ for some sharp element $a$ of $E$. If for some sharp element $c\in E$, we have $q_{a_n}\preceq_s q_c$ for $n\ge 1$, then $a_n\le c$ and $a\le c$, so that $q_a\preceq_s q_c$. Hence,  $\mathcal{SBO}(E)$ is a $\sigma$-MV-effect algebra, $\phi(a)=\bigvee_n\phi(a_n)$, and  $\phi$ is a $\sigma$-isomorphism of Boolean $\sigma$-effect algebras.

(4) Due to (3)(vi), $\mathcal{SBO}(E)$ is a $\sigma$-MV-effect algebra that is in fact a Boolean $\sigma$-effect algebra. Due to \cite[Prop 16.9]{Goo}, the $\ell$-group $\mathcal{SBO}(E)$ is a Dedekind $\sigma$-complete $\ell$-group.
\end{proof}

\begin{remark}\label{re:3.13}
{\rm It is worthy of recalling that all results of Theorem \ref{th:3.3}-- Proposition \ref{pr:3.10} are valid also if $E$ is a $\sigma$-complete effect algebra for which  distributive laws (3.5) hold. Theorem \ref{th:3.11} holds for such effect algebras is in the following form: The set $\mathcal{SBO}(E)$ is a subsemigroup and a sublattice of the partially ordered semigroup $\mathcal{BO}(E)$ under the Olson order.

We note that a distributive lattice effect algebra satisfying conditions of this remark is not necessarily an MV-effect algebra, see e.g. a four-element distributive effect algebra called a diamond, \cite[Ex 1.9.23]{DvPu}.
}
\end{remark}

\end{document}